 \title{Training Classifiers For Feedback Control}
 \author{Hasan A. Poonawala, Niklas Lauffer, and Ufuk Topcu
 \thanks{This material is based upon work supported by the National Science Foundation under Grant No. 1646522 and Grant No. 1652113. }
 \thanks{Hasan A. Poonawala is with the Department of Mechanical Engineering,
         University of Kentucky, Lexington, KY 40506, USA.
         {\tt\small hasan.poonawala@uky.edu}}
 \thanks{Niklas Lauffer is with the University of Texas, Austin, TX 78712, USA.
         {\tt\small nlauffer@utexas.edu}}
 \thanks{Ufuk Topcu is with the Department of Aerospace Engineering, University of Texas,
         Austin, TX 78712, USA.
         {\tt\small utopcu@utexas.edu}}%
 } 
\newcommand{\pd}[2]{\frac{\partial #1}{\partial #2}}
\newcommand{\bmat}[1]{\begin{bmatrix}#1\end{bmatrix}}
\newcommand{\xth}[1]{#1^{\mathrm{th}}}
\newcommand{\state}{x}
\newcommand{\State}{X}
\newcommand{\featuredim}{m}
\newcommand{\measurement}{y}
\newcommand{\Measurement}{Y}
\newcommand{\measuredim}{m}
\newcommand{\control}{u}
\newcommand{\Control}{U}
\newcommand{\clabel}{b}
\newcommand{\labone}{b_1}
\newcommand{\labtwo}{b_2}
\newcommand{\lblSet}{L}
\newcommand{\mode}{i}
\newcommand{\Mode}{I}
\newcommand{\numtrain}{N_D}
\newcommand{\trdata}{D}
\newcommand{\classifier}{C}
\newcommand{\Hsetfn}{\mathcal{H}}
\newcommand{\Hselection}{\hat \Hsetfn}
\newcommand{\cweights}{w}
\newcommand{\cweightsm}{w_1}
\newcommand{\ccon}{\cweights_{0}}
\newcommand{\Penvir}{P}
\newcommand{\deviation}{d}
\newcommand{\localangle}{\psi}
\newcommand{\locstate}{\state}
\newcommand{\exaenvir}{\rho}
\newcommand{\linalphaH}{H}
\newcommand{\linalpconst}{h}
\newcommand{\pState}{X}
\newcommand{\pwlSys}{\Omega}
\newcommand{\slack}{q}
\newcommand{\prtition}{\mathcal{P}}
\newcommand{\constraintset}{\mathcal W}
\newcommand{\qState}{Z}
\newcommand{\cMatrix}{E}
\newcommand{\cVec}{e}
\newcommand{\vMatrix}{F}
\newcommand{\vcVec}{f}
\newcommand{\Vprtition}{\mathcal Q}
\newcommand{\diffinclusion}{\mathcal A}
\newcommand{\Pindex}{\Mode_{\prtition}}
\newcommand{\Vindex}{\Mode_{\Vprtition}}
\newcommand{\Dindex}[1]{\Mode_{\diffinclusion_{#1}}}
\newcommand{\lyaplevelset}{S}
\newcommand{\R}{\mathbb{R}}
\theoremstyle{definition}
\newtheorem{defn}{Definition}
\theoremstyle{plain}
\newtheorem{thm}{Theorem}
\newtheorem{lem}[thm]{Lemma}
\newtheorem{prop}[thm]{Proposition}
\theoremstyle{remark}
\begin{document}
\maketitle
\begin{abstract}
One approach for feedback control using high dimensional and rich sensor measurements is to classify the measurement into one out of a finite set of situations, each situation corresponding to a (known) control action. %
This approach computes a control action without estimating the state. %
Such classifiers are typically learned from a finite amount of data using supervised machine learning algorithms. %
We model the closed-loop system resulting from control with feedback from classifier outputs as a piece-wise affine differential inclusion. 
We show how to train a linear classifier based on performance measures related to learning from data \emph{and} the local stability properties of the resulting closed-loop system. %
The training method is based on the projected gradient descent algorithm. %
We demonstrate the advantage of training classifiers using control-theoretic properties on a case study involving navigation using range-based sensors. %
\end{abstract}

\vspace{-2mm}
\section{Introduction}
 
A common situation in robotics involves using information-rich sensors, which provide high dimensional measurements, to control the state of a robot in different environments. 
Example of such sensors include cameras and LIDAR. %
Even though the available measurement is high-dimensional, the robot may often only need to identify the current situation it is in and apply a corresponding control, without explicit knowledge of the state. %
Obstacle avoidance using proximity sensors such as SONAR are an example of this strategy. %
A finite set of controls is often sufficient to achieve safe and stable operation of the robot in that environment, where each control in the set corresponds to one of the specific situations that is known to occur. %


A classifier, trained using supervised learning methods, often performs the identification step. %
Once the measurement has been classified into one of the finite possible situations, the system uses a pre-designed control action associated with the classifier output. %
In many robotic systems such as for mobile robots, human expertise is sufficient to design these control actions. %
We refer to such a feedback control system as a classifier-in-the-loop system. %
Figure~\ref{fig:cilsdia} depicts such a feedback mechanism. %
Several feedback systems in the literature are classifier-in-the-loop systems~\cite{Giusti16,Levine2016}, however the evaluation of their properties are almost always empirical. %
We seek to provide a more rigorous approach to the analysis and synthesis of classifiers used for control purposes. %

Given training data, one can use supervised learning methods~\cite{Alpaydin10,Cortes1995} to design a classifier that assigns one of the finite possible controls to a measurement. %
A common approach to supervised learning involves solution of an optimization problem. %
The objective function typically consists of a loss function that penalizes errors between the classifier's prediction for a measurement and the actual target value associated with that measurement in the dataset. %

A low value of the loss function does not necessarily say anything about the properties of the resulting closed-loop system. %
We require a method to relate the closed-loop system properties with the parameters of the classifier. %
We wish to reformulate existing techniques for training classifiers in a way that is meaningful for their use as feedback controllers. %

An important observation that permits development of the training methods we will present involves the recognition that a classifier-in-the-loop control scheme can be modeled~\cite{PoonawalaCDC17} using switched~\cite{Filippov1988} and/or hybrid system formalisms~\cite{goebel2012hybrid}. %
The classifier parameters dictate the switching (or guard) surfaces of the closed-loop system. %
Training the classifier is equivalent to determining the appropriate switching surface. %
Analysis of switched systems with variable switching surfaces is central to training of classifier-in-the-loop systems. %
Some methods exist to analyze or design such hybrid systems~\cite{Prajna03,Hu08,Trofino09,JohanssonThesis, Cortes2008,Blanchini1995}. %
We will use methods from~\cite{JohanssonThesis, Cortes2008,Blanchini1995}. %

\begin{figure}[tb]
\centering
\scalebox{0.8}{
\begin{tikzpicture}
\tikzstyle{myedge}=[->,semithick,>=stealth',shorten >=1pt, shorten <= 1pt,auto]
\tikzstyle{mynode}=[thick,rectangle,draw,minimum size = 0.5cm,rounded corners = 0.05cm]
\coordinate (p1) at (0,0);
\draw[thick] ($(p1)-(0.5,1)$) coordinate (O)--++(30:1)coordinate (A)--++(90:1)coordinate (B)--++(150:1)coordinate (C)--cycle;
\foreach \y/\t in {0.1/1,0.2/2} {
\draw[thick] ($(C)! \y*2.1 !(O)$)--++(180:1) node[left] {$\control_{\t}$};}
\draw[thick] ($(C)! 0.4*2.1 !(O)$)--++(180:1) node[left] {$\control_{N}$};
\draw[thick] ($(C)! 0.3*2.0 !(O) -(1.15,0)$) node[left] {\small $\vdots$};
\def\ndhp{2}
\node (dummy) at (p1){};
\node[mynode] (fts) at ($(p1)+(1.25*\ndhp,0)$) {$\dot{\state} = f(\state,\control)$ };
\node[mynode] (sensor) at ($(p1)+(2.25*\ndhp,-0.5*\ndhp)$) {Sensor $\Hsetfn$};
\path[myedge,draw] (fts) -|  node {$\state$}  (sensor);
\node[mynode] (ic) at ($(p1)+(1.25*\ndhp,-\ndhp)$) {Classifier $\classifier$};
\path[myedge,draw] (sensor) |-  node {$\measurement$}  (ic);
\path[myedge,draw] (ic) -|  node {$\clabel$}  ($(O)!0.5!(A)$);
\path[myedge,draw] ($(A)!0.5!(B)$) --  node {$\control$}  (fts);
\node[text width = 2.5cm,align=center] at ($(C)! 0.6*2.0 !(O) -(1.25,0)$) {finite control set $\Control$};
\end{tikzpicture}
}
\caption{A dynamical system with a classifier $\classifier$ in the feedback loop. %
The measurement $\measurement$ obtained by the sensor in a state $\state$ 
depends on the (unknown) map $\Hsetfn$. %
The classifier converts the measurement into a label $\clabel$ that determines which control $\control_\mode \in \Control$ is chosen as the control input $\control$.}
\label{fig:cilsdia}
\vspace{-5mm}
\end{figure}
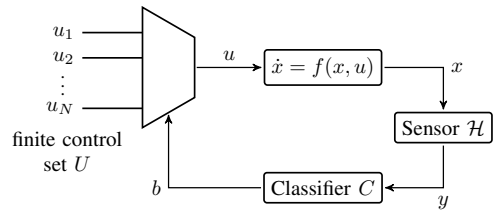
\vspace{-5mm}
\subsection*{Contributions}
This work involves three contributions. %
First, we show how to model the control of dynamical systems via classification using piece-wise affine differential inclusions~\cite{Cortes2008}. %
Second, we formulate the training problem for classifiers used in control as a constrained optimization problem, and derive the corresponding constraints using Lyapunov-based stability conditions appropriate for piece-wise affine differential inclusions~\cite{Cortes2008, JohanssonThesis}. %
These constraints are bilinear in the optimization variables. %
Our third contribution is to develop an algorithm for solving the constrained optimization problem based on the projected gradient descent algorithm. %

The work in this paper differs from~\cite{PoonawalaCDC17} in that here we propose computational algorithms to design classifier-in-the-loop systems. %
We apply our training method for classification-based feedback control to a robot navigation problem simulated in ROS Gazebo. %

\quad  

\section{Control-Oriented Training Of Classifiers }
\label{sec:problem}
\def\cweights{w}
\def\R{\mathbb{R}}
Consider a classifier $C$ parametrized by a set of weights $\cweights \in \R^{s}$ for some $s \in \mathbb{N}$. 
Training a classifier typically involves the solution of an optimization problem in the form 
\begin{dmath}
\min_{\cweights \in \R^{s}} \quad l_{data}(\cweights),
\label{eq:classifier}
\end{dmath}
\noindent where $l_{data} \colon \R^{s} \to \R$ is a loss function for $\cweights$ evaluated on the data set $D$. 

Assuming that we can compute a gradient of $l_{data}(w)$, an iterative algorithm to compute a local optimal point $w^*$ is given by 
\begin{dmath}
w_{k+1} = w_{k} - \alpha_k \nabla l_{data}(w)^T
\end{dmath}
\noindent where $\nabla l_{data}(w)$ is the gradient, $\alpha_k$ is the learning rate, and $w_{k}$ is the estimate of $w^*$ at the $\xth{k}$ iteration. %

We modify~\eqref{eq:classifier} to train classifiers that will be used for control in two ways. %
We add a term $l_{control}(\cweights)$ to the objective function. %
We also add constraints on $\cweights$ such that all feasible solutions of the optimization problem correspond to closed-loop systems with desired behavior. %
Let $\constraintset$ be the feasible set under these constraints. %
The resulting optimization problem that trains classifiers for control is
\begin{dmath}
\min_{w \in \constraintset}  \quad l_{data}(w) + l_{control}(w) \label{eq:classifierintheloop1}	
\end{dmath}

Given the constraints defining $\constraintset$, we can compute an optimal solution $w^{*}$ using projected gradient descent. %
This procedure involves solution of the iterative equations
\begin{IEEEeqnarray}{rl}
w_{k+1}' = &\ w_{k} - \alpha_k  \left(\nabla l_{data}(w) + \nabla l_{control}(w) \right)^T, \textrm{ and \ } \label{eq:intropgd1}\\
w_{k+1} = &\  \arg \min_{w \in \constraintset} \quad \lVert w - w_{k+1}' \rVert. \label{eq:intropgd2}	
\end{IEEEeqnarray}
Figure~\ref{fig:projgraddescent} depicts the procedure. 

The key problems considered in this paper are three-fold. %
First, how do we robustly model the closed-loop system derived from a control scheme involving classification?
Second, how do we derive the constraints on the classifier parameters that, if satisfied, imply desirable closed-loop properties of the modeled system?
Third, given these constraints (equivalently, the constraint set $\constraintset$), how do solve the optimization problem in~\eqref{eq:intropgd2}?

The next three sections provide a solution to each of these problems. %
Briefly, we propose the use of piece-wise affine differential inclusions to model the closed-loop system, the use of piece-wise linear Lyapunov functions to certify closed-loop properties, and algorithms for solving biconvex optimization problems to solve~\eqref{eq:intropgd2}. %
These solutions, together, allow us to train classifiers that yield control performance guarantees on the closed-loop system behavior. %
Note that some of the choices we make to solve each problem are important for being able to combine the three solutions. %

\begin{figure}[tb]
\centering
\begin{tikzpicture}
	\node[inner sep=0pt] (main) at (0,0) {\includegraphics[width=0.2\textwidth]{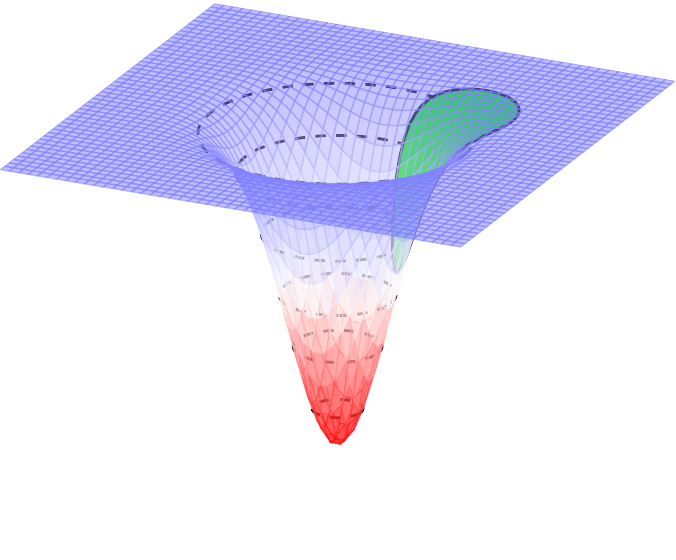}};
	\node[right of=main,node distance = 4cm] (sec) {\includegraphics[width=0.2\textwidth]{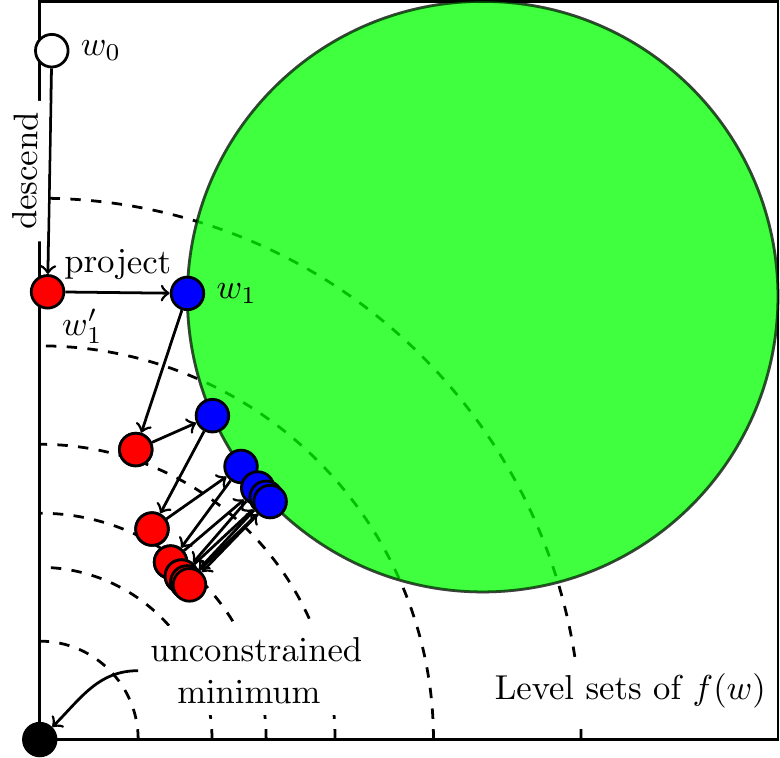}};
	\node[below of=main,node distance = 2cm] {a)};
	\node[below of=sec,node distance = 2cm] {b)};
	\path[draw,->] (-1,-1) -- +(0,1) node[below left] {$l(w)$};
	\path[draw,->] (60:1) to [out = 90, in=180] +(60:1) node[right] {$f(\constraintset)$};
	\node at (4.5,1) {$\constraintset$};
	\path[draw,->] (60:1) to [out = 90, in=180] +(60:1) node[right] {$f(\constraintset)$};
\end{tikzpicture}
\caption{\small The projected gradient descent algorithm. a) The surface denotes the value of the objective function, and the green patch denotes the objective values for points in the feasible region. b) The red and blue points depict the successive iterates of the algorithm. The descent step creates an infeasible point (red), which is projected onto the feasible set to obtain a feasible point (blue). In this case, the feasible iterates (blue dots) converge to the optimal solution.}
\label{fig:projgraddescent}
\vspace{-4mm}
\end{figure}

\section{Classifier-in-the-Loop Systems}
\label{sec:cils}

In this section we show how to model the dynamics of a classifier-in-the loop system as a piece-wise affine differential inclusion~\cite{Cortes2008} of the form
\begin{dmath}
	\dot \state (t) \in \diffinclusion(\state(t)), \label{eq:targetpwadi}
\end{dmath}
\noindent where $x \in \R^n$ and $\diffinclusion \colon \R^n \to 2^{\R^n}$ is a set-valued map constructed using affine functions. %
\vspace{-3mm}
\subsection{How Do We Model Classifiers For Control?}
\label{ssec:classifiersmodel}

In general, a classifier $C\colon \Measurement \to \lblSet$ is a map that assigns a unique label $\clabel \in \lblSet$ to an measurement $\measurement \in \Measurement$, where $\Measurement \subseteq \R^{\featuredim}$ is the space of measurements. %
The set $\lblSet$ is typically finite. %
Several methods are available to construct a classifier $\classifier$~\cite{Alpaydin10}. %
We focus our attention on classifiers that partition the measurement space into polytopes of identically classified points. %
This class of classifiers is fairly large, and includes linear classifiers, rectifier networks (common in deep learning), decision trees, and nearest-neighbor classifiers. %

Let $\cweights$ denote the parameters of a classifier $\classifier$. %
We refer to the procedure for obtaining $\cweights$ using data as training the classifier. %
We represent the classifier $\classifier$ as 
\begin{dmath}
\classifier(\measurement) = \clabel_i  \condition{if $\bar E_i(\cweights) \measurement + \bar e_i(\cweights) > 0$},\label{eq:multiclassifier}
\end{dmath}
\noindent where $i \in I$, $I$ is the index set of convex polytopes in the partition induced by the classifier. %
Note that non-convex polytopes are easily divided into convex polytopes. %
 
When $\lblSet = 2$, the classifier~\eqref{eq:multiclassifier} becomes
\begin{dmath}
\classifier(\measurement) = \begin{cases}
                {\labone} & \condition{if $\cweightsm^T \measurement + \ccon > 0$}\\
                {\labtwo} & \condition{if $\cweightsm^T \measurement + \ccon  < 0$},
                \end{cases}	
                \label{eq:basicclassifiertwo}
\end{dmath}
\noindent where $\cweights= (\cweightsm, \ccon) \in \R^{\measuredim+1}$ are learned from data. %
In this case, the classifier parameters $\cweights$ in~\eqref{eq:basicclassifiertwo} directly yield the partition (equivalently, parameters $\bar E_i(\cweights)$ and $\bar e_i(\cweights)$) of $\Measurement$. %
For nearest-neighbor classifiers or decision trees, the parameters $\bar E_i(\cweights)$ and $\bar e_i(\cweights)$ will need to be derived from the trained classifier parameters $\cweights$. %

When $\lvert \lblSet \rvert > 2$, one often constructs a classifier $\classifier \colon \Measurement \to \lblSet$ by combining multiple binary classifiers~\eqref{eq:basicclassifiertwo} in different ways. %
One way is to construct a decision tree, where every node is a binary classifier. %
Another way is to train multiple classifiers parameterized as $\cweights^j = (\cweightsm^j, \ccon^j)$, where each classifier $\cweights^j$ distinguishes between one of the $\binom{\lvert \lblSet \rvert}{2}$ possible pairs of labels from $\lblSet$. %
The partitions induced by this approach can be derived from $\cweights^j$. %
This multi-label classification scheme is known as one-vs-one classification. %
Instead, we can train $\lvert \lblSet \rvert$ classifiers that separate each label from all other labels. %
This classification scheme is known as one-vs-all classification. %
\begin{figure}
\centering
	\begin{tikzpicture}
\tikzstyle{myedge}=[shorten >=1pt,auto,semithick]
\tikzstyle{axes}=[->,opacity = 1.0]
\tikzstyle{mysubcaption}=[text width = 4cm, align = center]

\def \xaxissize {1.7};
\def \yaxissize {1.7};
\def \lowoffset {1.0}
\def \uppoffset {0.5}
\def \figxspace {4.5}
\def \figyspace {4.5}

\coordinate (or1) at (0,0);
\coordinate (lowpoint) at ($(or1)+(\lowoffset, -\yaxissize )$);
\coordinate (uppoint) at ($(or1)+(-\xaxissize,\uppoffset)$);
\coordinate (c1) at ($(or1)+(\xaxissize,\yaxissize)$);
\coordinate (c2) at ($(or1)+(-\xaxissize,\yaxissize)$);
\coordinate (c3) at ($(or1)+(-\xaxissize,-\yaxissize)$);
\coordinate (c4) at ($(or1)+(\xaxissize,-\yaxissize)$);
\path[draw, name path = A] (uppoint) to [out = 0, in = 150] (or1) (or1) to [out = -30, in = 90] (lowpoint);
\path[name path = B] (uppoint) -- (c3) -- (lowpoint);
\path[name path = C] (uppoint) -- (c2) --(c1) -- (c4) -- (lowpoint);
\tikzfillbetween[of=B and A]{red!40!white};
\tikzfillbetween[of=C and A]{blue!40!white};
\path[draw,thick] (uppoint) to [out = 0, in = 150] (or1) (or1) to [out = -30, in = 90] (lowpoint);
\draw[axes] ($(or1)-(0:\xaxissize)$) -- ($(or1)+(0:\xaxissize)$) node [anchor = north east] {$\state_1$};
\draw[axes] ($(or1)-(90:\yaxissize)$) -- ($(or1)+(90:\yaxissize)$) node [anchor = north east] {$\state_2$};
\node[below, mysubcaption] at ($(0,-\yaxissize )+(or1)$){\small a) Nonlinear closed-loop system};
\node at ($(or1) - (1,1)$) {$f(x,u_1)$};
\node at ($(or1) + (1,1)$) {$f(x,u_2)$};
\node[rotate=-20] at ($(or1) + (150:0.7)+(0,0.2)$) {$w^T \Hsetfn(x) = 0$};
\coordinate (or1) at (\figxspace,0);
\coordinate (lowpoint) at ($(or1)+(\lowoffset, -\yaxissize )$);
\coordinate (uppoint) at ($(or1)+(-\xaxissize,\uppoffset )$);
\coordinate (c1) at ($(or1)+(\xaxissize,\yaxissize)$);
\coordinate (c2) at ($(or1)+(-\xaxissize,\yaxissize)$);
\coordinate (c3) at ($(or1)+(-\xaxissize,-\yaxissize)$);
\coordinate (c4) at ($(or1)+(\xaxissize,-\yaxissize)$);
\coordinate (cS) at ($(or1)+(0,-\yaxissize)$);
\coordinate (cN) at ($(or1)+(0,\yaxissize)$);
\path[draw, name path = AA] (uppoint) to [out = 0, in = 150] (or1);
\path[draw, name path = AB] (or1) to [out = -30, in = 90] (lowpoint);
\path[name path = t1] (or1) -- ($(or1)+(150:2)$);
\path[name path = t2] (c3) -- (c2);
\path[name intersections={of= t1 and t2}];
\coordinate (bor1) at (intersection-1);
\path[name path = t3] (or1) -- ($(or1)+(-30:2)$);
\path[name path = t4] (c1) -- (c4) -- (c3);
\path[name intersections={of= t3 and t4}];
\coordinate (bor2) at (intersection-1);
\path[name path = BB] (uppoint) -- (c3) -- (cS) -- (or1);
\path[name path = CC] (or1) -- (cS) -- (lowpoint);
\path[name path = DD] (bor1) -- (c2) -- (cN) -- (or1);
\path[name path = EE] (or1) -- (cN) -- (c1) -- (bor2);
\tikzfillbetween[of=BB and AA]{red!40!white};
\tikzfillbetween[of=CC and AB]{red!40!white};
\tikzfillbetween[of=DD and AA]{blue!40!white};
\tikzfillbetween[of=EE and AB]{blue!40!white};
\fill[purple!70!white] (or1) to (uppoint) -- (bor1) -- (or1);
\fill[purple!70!white] (or1) to (bor2) -- (c4) -- (lowpoint) -- (uppoint) --(bor1);
\path[draw,thick] (uppoint) to [out = 0, in = 150] (or1) (or1) to [out = -30, in = 90] (lowpoint);
\draw[axes] ($(or1)-(0:\xaxissize)$) -- ($(or1)+(0:\xaxissize)$) node [anchor = north east] {$\state_1$};
\draw[axes] ($(or1)-(90:\yaxissize)$) -- ($(or1)+(90:\yaxissize)$) node [anchor = north east] {$\state_2$};
\path[draw] (bor2) -- (c4) -- (lowpoint) -- (uppoint) --(bor1) -- (bor2);
\node[below, mysubcaption] at ($(0,-\yaxissize )+(or1)$){\small b) Over-approximation of Figure~\ref{fig:pwaapproximation}a};
\node at ($(or1) - (1,1)$) {$\diffinclusion_1$};
\node at ($(or1) + (1,1)$) {$\diffinclusion_2$};
\node at ($(or1) + 1.1*(1.1,-1)$) {$\diffinclusion_3$};
\coordinate (or1) at (0,-\figyspace);
\coordinate (lowpoint) at ($(or1)+(\lowoffset, -\yaxissize )$);
\coordinate (uppoint) at ($(or1)+(-\xaxissize,\uppoffset)$);
\coordinate (c1) at ($(or1)+(\xaxissize,\yaxissize)$);
\coordinate (c2) at ($(or1)+(-\xaxissize,\yaxissize)$);
\coordinate (c3) at ($(or1)+(-\xaxissize,-\yaxissize)$);
\coordinate (c4) at ($(or1)+(\xaxissize,-\yaxissize)$);
\path[draw, name path = AA] (uppoint) to [out = 0, in = 150] (or1);
\path[draw, name path = AB] (or1) to [out = -30, in = 90] (lowpoint);
\path[name path = t1] (or1) -- ($(or1)+(150:2)$);
\path[name path = t2] (c3) -- (c2);
\path[name intersections={of= t1 and t2}];
\coordinate (bor1) at (intersection-1);
\path[name path = t3] (or1) -- ($(or1)+(-30:2)$);
\path[name path = t4] (c1) -- (c4) -- (c3);
\path[name intersections={of= t3 and t4}];
\coordinate (bor2) at (intersection-1);
\fill[blue!40!white] (bor1) to (c2) -- (c1) -- (bor2);
\fill[red!40!white] (bor2) -- (c4) -- (c3) -- (bor1);
\path[draw,thick,dashed] (uppoint) to [out = 0, in = 150] (or1) (or1) to [out = -30, in = 90] (lowpoint);
\draw[axes] ($(or1)-(0:\xaxissize)$) -- ($(or1)+(0:\xaxissize)$) node [anchor = south east] {$\state_1$};
\draw[axes] ($(or1)-(90:\yaxissize)$) -- ($(or1)+(90:\yaxissize)$) node [anchor = north east] {$\state_2$};
\path[draw,thick] (bor2) -- (bor1);
\node[below, mysubcaption] at ($(0,-\yaxissize )+(or1)$){\small c) Linearization of Surface};
\node at ($(or1) - (1,1)$) {$\diffinclusion_1$};
\node at ($(or1) + (1,1)$) {$\diffinclusion_2$};
\node[rotate=-30] at ($(or1) + (60:0.3)+(150:0.2)$) {$w^T ( Hx + h) = 0$};
\coordinate (or1) at (\figxspace,-\figyspace);
\coordinate (lowpoint) at ($(or1)+(\lowoffset, -\yaxissize )$);
\coordinate (uppoint) at ($(or1)+(-\xaxissize,\uppoffset )$);
\coordinate (c1) at ($(or1)+(\xaxissize,\yaxissize)$);
\coordinate (c2) at ($(or1)+(-\xaxissize,\yaxissize)$);
\coordinate (c3) at ($(or1)+(-\xaxissize,-\yaxissize)$);
\coordinate (c4) at ($(or1)+(\xaxissize,-\yaxissize)$);
\coordinate (cS) at ($(or1)+(0,-\yaxissize)$);
\coordinate (cN) at ($(or1)+(0,\yaxissize)$);
\path[draw, name path = AA] (uppoint) to [out = 0, in = 150] (or1);
\path[draw, name path = AB] (or1) to [out = -30, in = 90] (lowpoint);
\path[name path = t1] (or1) -- ($(or1)+(150:2)$);
\path[name path = t2] (c3) -- (c2);
\path[name intersections={of= t1 and t2}];
\coordinate (bor1) at (intersection-1);
\path[name path = t3] (or1) -- ($(or1)+(-30:2)$);
\path[name path = t4] (c1) -- (c4) -- (c3);
\path[name intersections={of= t3 and t4}];
\coordinate (bor2) at (intersection-1);
\path[name path = BB] (uppoint) -- (c3) -- (cS) -- (or1);
\path[name path = CC] (or1) -- (cS) -- (lowpoint);
\path[name path = DD] (bor1) -- (c2) -- (cN) -- (or1);
\path[name path = EE] (or1) -- (cN) -- (c1) -- (bor2);
\tikzfillbetween[of=BB and AA]{red!40!white};
\tikzfillbetween[of=CC and AB]{red!40!white};
\tikzfillbetween[of=DD and AA]{blue!40!white};
\tikzfillbetween[of=EE and AB]{blue!40!white};
\fill[purple!70!white] (or1) to (uppoint) -- (bor1) -- (or1);
\fill[purple!70!white] (or1) to (bor2) -- (c4) -- (lowpoint) -- (or1);
\path[draw,thick] (uppoint) to [out = 0, in = 150] (or1) (or1) to [out = -30, in = 90] (lowpoint);
\draw[axes] ($(or1)-(0:\xaxissize)$) -- ($(or1)+(0:\xaxissize)$) node [anchor = north east] {$\state_1$};
\draw[axes] ($(or1)-(90:\yaxissize)$) -- ($(or1)+(90:\yaxissize)$) node [anchor = north east] {$\state_2$};
\path[draw] (or1) -- (bor2) (lowpoint) -- (or1) -- (bor1) (uppoint) -- (or1);
\node[below, mysubcaption] at ($(0,-\yaxissize )+(or1)$){\small d) Over-approximation of Figure~\ref{fig:pwaapproximation}c};
\node at ($(or1) - (1,1)$) {$\diffinclusion_1$};
\node at ($(or1) + (1,1)$) {$\diffinclusion_2$};
\node at ($(or1) + 1.1*(1.1,-1)$) {$\diffinclusion_3$};
\node at ($(or1) + (-1.5,0.7)$) {$\diffinclusion_3$};
\end{tikzpicture}
	\vspace{-5mm}
	\caption{\small [Figure best viewed in color.] Sketch of the modeling steps that lead to a piece-wise affine differential inclusion model, such as in a). %
	a) The (binary) classifier creates a switched dynamical system with a nonlinear switching surface. %
	b) We can define partitions and differential inclusions to over-approximate the original system with differential inclusions $\diffinclusion_{(\cdot)}$. %
	c) To obtain partition parameters that depend linearly on $\cweights$, we can linearize $\Hsetfn$. %
	d) Again, we over-approximate the system to account for uncertainties. }
	\label{fig:pwaapproximation}
	\vspace{-4mm}
\end{figure}
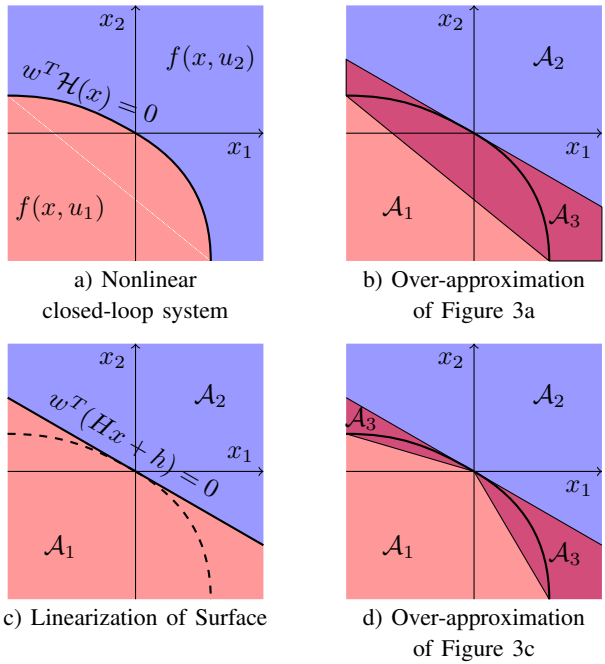
\vspace{-2mm}
\subsection{What Data Are Required?}
The set $\trdata$ of training data generally consists of $\numtrain$ triples $(\state^k,\measurement^k,\clabel^k)$ where $\state$ is the state of the robot, $\measurement$ is the measurement obtained in that state, $\clabel$ is the class label associated with the measurement, and $k$ denotes the index of the triple in the dataset. %

We assume that some labels $\clabel \in \lblSet$ correspond to known control actions $\control_i \in \Control$ that should be taken when the corresponding measurement is observed, according to the data. %
The classifier predicts labels for each measurement, effectively predicting a control action for each measurement. %

To analyze the classifier, we also learn an approximate map $\measurement = \Hselection(\state)$. %
The approximation $\Hselection$ may be used for all states $\state \in \State$, or may be a local approximation using data corresponding to some neighborhood. %
\vspace{-2mm}
\subsection{How Do We Model The Control?}
When we associate a specific control $\control_i \in \Control$ action to each label $\clabel_i \in \lblSet$, the classifier $C$ effectively assigns a control $\control_{i} \in \Control$ to a measurement, where $i \in \{1,2,\dots,\lvert\Control\rvert \}$ (see Figure~\ref{fig:cilsdia}). %
That is,
\begin{dmath}
\control(t) = C( \measurement(t)). \label{eq:controlfrommeas}
\end{dmath}
Recall that $\Control$ is finite and $u(t)$ switches in time between its elements. %
Note that we obtain the control input from the measurement, \emph{not} the state. %

We approximate each vector field $f(\state,\control_i)$ using an affine differential inclusion $\diffinclusion_i $, given by  
\begin{dmath}
\diffinclusion_i =  co \left( \{A_{ik} \state + a_{ik}\}_{k \in \Dindex{i}} \right),
\label{eq:localdynamicsapprox}
\end{dmath}
\noindent where $co(\cdot)$ is the convex hull operation, and $\Dindex{i}$ is the (finite) index set of the affine vector fields that define $\diffinclusion_i$. %
For $\diffinclusion_i$ to be a valid over-approximation of $f(x,u_i)$ over some set $S \subseteq \R^n$, we require that  $ \cup_{x \in S} f(x,u_i) \subseteq \diffinclusion_i$. %
\vspace{-2mm}
\subsection{How Do We Derive the Closed-Loop Dynamics?}
To derive a closed-loop model of the form~\eqref{eq:targetpwadi}, we must be able to model which labels may be assigned in a state $\state$. %
We assume that there is a continuous functional relationship between the measurement and the state, at least locally in space and time, so that $\measurement = \Hsetfn(\state)$. %
This assumption is reasonable for robots operating in slowly changing environments. %
We therefore model~\eqref{eq:controlfrommeas} as 
\begin{dmath}
\control(t) = C \left( \Hsetfn(\state(t)) \right). \label{eq:controlfromstate}
\end{dmath}
We will use an approximation $\Hselection \colon \State \to \Measurement$ of the map $\Hsetfn$, learned from data, to define the dynamics in a state $\state$. %

A key idea of our work is that classifier $\classifier$ partitions the measurement space $\Measurement$, so that control~\eqref{eq:controlfromstate} is effectively a state-based switching control. %
The dynamics switches between the vector fields $f(\state,\control_i)$, where $i \in \{1,\dots, \lvert U \rvert\}$, which we approximate by the inclusions $\diffinclusion_i$. %
From equation~\eqref{eq:multiclassifier}, the partitions of $\R^n$ induced by the partitions of $\Measurement$ are given by inequalities of the form $\bar E_i(\cweights) \Hselection(\state)  + \bar e_i(\cweights) > 0$.
These inequalities may define cells with nonlinear boundaries. %

To model the closed-loop system in a way that is robust to the uncertainty in the switching surfaces, we define convex polytopic partitions where the dynamics may be a combination of the differential inclusions $\diffinclusion_i$ that approximate the vector fields $f(\state,\control_i)$. %
Figure~\ref{fig:pwaapproximation} depicts an example of this process.
 Figure~\ref{fig:pwaapproximation}a shows the case when the map $\Hsetfn$ is nonlinear, so that the switching surface is not planar. %

There are two approaches to obtaining convex partitions. %
The first one involves linearization of $\Hselection$ followed by over-approximation, as in Figures~\ref{fig:pwaapproximation}c and~\ref{fig:pwaapproximation}d. %
The linear estimate in a neighborhood of a point $\state_e$ is given by
\begin{dmath}
\measurement = \Hselection(\state)\\
= \Hselection(\state_e) + \pd{\Hselection}{x}  ( \state - \state_e) + O((x - x_e)^2) \\
 \approx \linalphaH \state + \linalpconst. 
 \label{eq:linearapproxhsetfn}
 \end{dmath}
We locally approximate the partitions by the inequalities 
\begin{IEEEeqnarray}{rl}
& \bar E_i(\cweights) \Hsetfn(\state) +\bar e_i(\cweights) \geq 0 \nonumber \\
\approx & \bar E_i(\cweights) ( H \state + h) +\bar e_i(\cweights) \geq 0 \nonumber \\
= & E_i(\cweights) \state + e_i(\cweights) \geq 0~\label{eq:linearpartitions2}.
\end{IEEEeqnarray}
The set $\diffinclusion_3$ in Figures~\ref{fig:pwaapproximation}b and~\ref{fig:pwaapproximation}d is given by $\diffinclusion_3 = co(\diffinclusion_1 \cup \diffinclusion_2 )$. %
The advantage of this approach is that when using binary classifiers of the form~\eqref{eq:basicclassifiertwo}, the partition parameters $E_i(\cweights)$ and $e_i(\cweights)$ are linear in $\cweights$. %

A second approach is to over-approximate the nonlinear boundary in Figure~\ref{fig:pwaapproximation}a using polyhedral sets, as in Figure~\ref{fig:pwaapproximation}b. %
While this approach is intuitively more appealing than one involving linearization, it is harder to express the partition parameters $E_i(\cweights)$ and $e_i(\cweights)$ as linear functions of $\cweights$. %

By combining the classifier representation~\eqref{eq:multiclassifier}, the differential inclusion dynamics~\eqref{eq:localdynamicsapprox}, and the approximations~\eqref{eq:linearpartitions2}, we model the classifier-in-the-loop system as
\begin{dmath}
\dot{\state} \in \diffinclusion_i  \condition{if $  E_i(\cweights) \state + e_i(\cweights) \geq 0$},
\label{eq:pwadi}
\end{dmath}
\noindent which is a differential inclusion of the form $\dot{x} \in \diffinclusion(\state)$. %
\vspace{-2mm}
\subsection{Is This Model Robust To Uncertainties?}
The use of over-approximations inherently provides robustness to uncertainties and modeling errors. %
The caveat to this approach is that the over-approximation may exhibit far too many trajectories. 
Some of these trajectories may not satisfy the closed-loop properties we wish to certify for the system, while a tighter over-approximation may satisfy the control properties. %
Procedures to obtain such tight over-approximations are beyond the scope of this paper. %

\section{Control-Oriented Constraints On Classifier Parameters}
\label{sec:stability}

Sufficient conditions for certifying the closed-loop properties of~\eqref{eq:pwadi} become conditions on the classifier parameters $\cweights$, that is, they define $\constraintset$ in~\eqref{eq:intropgd2}. %
The properties of interest to us include practical asymptotic stability (ultimate boundedness), forward set invariance, and asymptotic stability. %
These properties physically correspond to low set-point or tracking errors, or to safety via boundedness of the state. %
We want these properties to hold for all closed-loop trajectories. %

Our approach follows much of the existing work on analysis of piece-wise affine differential inclusions~\cite{Cortes2008,Blanchini1995,JohanssonThesis}. %
These papers derive sufficient conditions under different assumptions and parameterizations of the certificates (typically Lyapunov functions) for control properties. %
We present the conditions in terms of our chosen parameterization below. %
We choose polyhedral Lyapunov functions as motivated by~\cite{Blanchini1995}, the stability conditions come from results in~\cite{Cortes2008}, and methods to remove quantifiers from these conditions are inspired by~\cite{JohanssonThesis}. %

A partition $\prtition$ in $\R^n$ is a collection of subsets $\{\pState_i \}_{i \in \Pindex}$; where $\Pindex$ is an index set, $n \in \mathbb{N}$, $\pState_i \subseteq \R^n$ 
for each $i \in \Pindex$, and $Int(\pState_i) \cap  Int(\pState_j) = \emptyset$ for each pair $i,j \in \Pindex$ such that $i \neq j$. %
We define the domain $Dom(\prtition)$ of the partition as $Dom(\prtition) = \cup_{i \in \Pindex} \pState_i$. %
We also refer to the subsets $\pState_i$ in $\prtition$ as the cells of the partition. %
Note that this definition allows some cells in $\prtition$ to represent the boundary between other cells in $\prtition$, which is useful for handling sliding modes. %

A piece-wise affine dynamical system $\pwlSys_\prtition$ associated with partition $\prtition = \{\pState_j \}_{j \in \Pindex}$ is a collection,
\begin{dmath}
	\Omega_\prtition = \left\{ \diffinclusion_i \right\}_{i \in \Pindex}
\end{dmath}
that to each cell $\pState_i \in \prtition$ assigns the affine differential inclusion $\diffinclusion_i = \{A_{ik} \state + a_{ik} \}_{k \in \Dindex{i}}$. %
\begin{dmath}
{\quad \dot \state(t) = co(\diffinclusion_i), \textrm{ if } \state_i(t) \in \State_i}.
\end{dmath}
The cell $\pState_i \in \prtition$ is given by
\begin{dmath}
\pState_i = \{\state \hiderel{\in} \R^n \colon \cMatrix_i \state + 	\cVec_i \hiderel{\geq} 0 \}.
\end{dmath}

We parameterize a continuous polyhedral Lyapunov function $V_{\Vprtition}(\state)$ with a partition $\Vprtition = \{\qState_j \}_{j \in \Vindex}$ and a collection of vectors $\{p_i \}_{i \in \Vindex}$ such that $V_{\Vprtition}(\state) = p_i^T x, \textrm{ if } x \in \qState_i \subseteq \R^n$. %
Each set $\qState_j \in \Vprtition$ is given by $\qState_j = \{x \in \R^n \colon {\vMatrix_j \state \geq 0} \}$, and we assume that $\qState_j $ is pointed at the origin. %

We define index sets that denote the relationship between the system $\pwlSys_\prtition$ and the Lyapunov function $V_{\Vprtition}$. %
Let $\Mode_{cont} \subseteq \Vindex \times \Vindex$ be the set of pairs of indices such that $\qState_i \cap \qState_j \neq \emptyset$. %
Let $\Mode_{dec}$ be the set of all triples $(i,j,k)$ such that $i \in \Pindex$, $k \in \Dindex{i}$, $j \in \Vindex$, and $\pState_i \cap \qState_j \hiderel{\neq} \emptyset$. %

Sufficient conditions on a piece-wise differential inclusion and candidate Lyapunov function that certify the existence of the control properties under consideration are given in~\cite{Cortes2008}. %
The result below formally states these conditions in terms of our parametrization and notation. %

\begin{lem}
\label{lem:mainconstraintlemma}
Let $\pwlSys_{\prtition}$ be a piece-wise affine dynamical system and $V_{\Vprtition}$ be a candidate Lyapunov function. %
Let $\Pindex$, $\Vindex$, $\Mode_{cont}$, and $\Mode_{dec}$ be the index sets associated with $\pwlSys_{\prtition}$ and $V_{\Vprtition}$. %
Let $Dom(\prtition)$ be connected, and let $co(Dom(\prtition))$ contain the origin. %
Let $\lyaplevelset_{max}$ and $\lyaplevelset_{min}$ be the largest and smallest level set of $V_{\Vprtition}(x)$ in $Dom(\prtition)$. %

If the set of constraints
\begin{IEEEeqnarray}{rl}
p_i \hiderel{=} \vMatrix_i^T \mu_i,  &\quad \forall i \in  \Vindex,  \label{eq:lemconstraintsfirst} \\
\mu_i \geq \mathbf{1}, &\quad \forall i \in  \Vindex, \label{eq:lemconstraintssecond}\\
\bmat{\cMatrix_i & \cVec_i \\ 0 & 1}^T v_{ijk} = - \bmat{A_{ik}^T \\ a_{ik}^T} p_j,  &\quad \forall (i,j,k) \in \Mode_{dec}, \label{eq:lemconstraintsdecrease}\\
v_{ijk} \geq \mathbf{1}, &\quad \forall (i,j,k) \in  \Mode_{dec}, \label{eq:lemconstraintsdecrease2}\\
p_i - p_{j}  \hiderel{=} \lambda_{ij} \vcVec_{ij},  &\quad \forall (i,j) \in  \Mode_{cont}, \textrm{ and} \IEEEeqnarraynumspace \label{eq:lemconstraintscont1}\\
\lambda_{ij} \geq 1, &\quad \forall (i,j) \in  \Mode_{cont},  \label{eq:lemconstraintslast}
\end{IEEEeqnarray}
\noindent is feasible, then
\begin{enumerate}
	\item $\lyaplevelset_{max}$ is invariant
	\item $\lyaplevelset_{min}$ is ultimately bounded
\end{enumerate}
Furthermore, if $0 \in Dom(\prtition)$, then the origin of $\pwlSys_{\prtition}$ is asymptotically stable with region of attraction $\lyaplevelset_{max}$. %
\end{lem}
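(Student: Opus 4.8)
The plan is to show that feasibility of \eqref{eq:lemconstraintsfirst}--\eqref{eq:lemconstraintslast} certifies that $V_{\Vprtition}$ is a genuine nonsmooth polyhedral Lyapunov function for the inclusion \eqref{eq:pwadi}, in exactly the sense required by the Filippov stability and invariance results of~\cite{Cortes2008}, and then to invoke those results. The three groups of constraints correspond to the three hypotheses of such a theorem: the continuity constraints \eqref{eq:lemconstraintscont1}--\eqref{eq:lemconstraintslast} make $V_{\Vprtition}$ well defined, continuous, and (being piece-wise linear) locally Lipschitz and regular; the positivity constraints \eqref{eq:lemconstraintsfirst}--\eqref{eq:lemconstraintssecond} make $V_{\Vprtition}$ positive definite; and the decrease constraints \eqref{eq:lemconstraintsdecrease}--\eqref{eq:lemconstraintsdecrease2} make its set-valued Lie derivative along $\diffinclusion$ strictly negative. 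I would establish these three facts in turn and then read off the three conclusions.

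For continuity, I would note that for $(i,j) \in \Mode_{cont}$ the cones $\qState_i$ and $\qState_j$ share a face lying in the hyperplane $\vcVec_{ij}^T \state = 0$; the constraint $p_i - p_j = \lambda_{ij}\vcVec_{ij}$ then gives $(p_i - p_j)^T\state = \lambda_{ij}\vcVec_{ij}^T\state = 0$ on that face, so the two linear pieces $p_i^T\state$ and $p_j^T\state$ agree there and $V_{\Vprtition}$ is single-valued and continuous. For positive definiteness, I would use that each $\qState_j$ is pointed at the origin: for $\state \in \qState_j$ we have $\vMatrix_j\state \geqq 0$, and substituting $p_j = \vMatrix_j^T\mu_j$ with $\mu_j \geqq \mathbf{1}$ yields $V_{\Vprtition}(\state) = \mu_j^T \vMatrix_j\state \geq \mathbf{1}^T \vMatrix_j\state \geq 0$; pointedness forces $\vMatrix_j\state \neq 0$ whenever $\state \neq 0$, so $V_{\Vprtition}(\state) > 0$ for $\state \neq 0$ while $V_{\Vprtition}(0) = 0$.

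The decrease condition is the crux and the main obstacle, because $V_{\Vprtition}$ is nondifferentiable on cell boundaries and the dynamics is set-valued with possible sliding modes. I would first work inside a single piece: for $\state \in \pState_i \cap \qState_j$ and a vertex field $A_{ik}\state + a_{ik}$, writing $v_{ijk} = (\bar v, v_0)$ and reading off the two block rows of \eqref{eq:lemconstraintsdecrease} gives $A_{ik}^T p_j = -\cMatrix_i^T\bar v$ and $a_{ik}^T p_j = -\cVec_i^T\bar v - v_0$, so that
\begin{dmath*}
p_j^T(A_{ik}\state + a_{ik}) = -\bar v^T(\cMatrix_i\state + \cVec_i) - v_0 \leq -v_0 \leq -1,
\end{dmath*}
where the inequalities use $\cMatrix_i\state + \cVec_i \geqq 0$ on $\pState_i$ and $v_{ijk} \geqq \mathbf{1}$; this is precisely a Farkas/S-procedure certificate of the pointwise bound $\nabla V_{\Vprtition}\cdot\dot\state \leq -1$ on cell interiors. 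The key remaining step is to lift this bound to the generalized setting: at any $\state$ the Clarke gradient satisfies $\partial V_{\Vprtition}(\state) \subseteq \mathrm{co}\{p_j : \state \in \qState_j\}$ while the admissible velocities lie in $\mathrm{co}\{A_{ik}\state + a_{ik} : \state \in \pState_i,\ k \in \Dindex{i}\}$; since the displayed bound is bilinear in $(p_j,\,A_{ik}\state + a_{ik})$ and holds for every extreme triple $(i,j,k) \in \Mode_{dec}$, it is preserved under convex combinations, so every $\zeta \in \partial V_{\Vprtition}(\state)$ and every admissible velocity $v$ satisfy $\zeta^T v \leq -1$. Hence the set-valued Lie derivative of $V_{\Vprtition}$ along $\diffinclusion$ is bounded above by $-1$ on $Dom(\prtition) \setminus \{0\}$. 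Correctly matching this bilinear relaxation to the exact form of the nonsmooth Lie-derivative condition in~\cite{Cortes2008}, including the sliding-mode case where both sets are genuinely set-valued, is where the care is needed.

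With $V_{\Vprtition}$ continuous, positive definite, and uniformly strictly decreasing in this generalized sense, the three conclusions follow from the nonsmooth Lyapunov and invariance theorems of~\cite{Cortes2008}. Because $\dot V_{\Vprtition} < 0$ throughout $Dom(\prtition)$, no trajectory can cross a level set of $V_{\Vprtition}$ outward, so the largest sublevel set $\lyaplevelset_{max}$ contained in $Dom(\prtition)$ is positively invariant; the uniform bound $\leq -1$ then forces $V_{\Vprtition}$ to keep decreasing at a bounded rate until a trajectory enters the smallest level set $\lyaplevelset_{min}$, which it cannot subsequently leave, giving ultimate boundedness; and when $0 \in Dom(\prtition)$, positive definiteness together with the strict decrease yields asymptotic stability of the origin with region of attraction $\lyaplevelset_{max}$. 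Throughout, the hypotheses that $Dom(\prtition)$ is connected and $\mathrm{co}(Dom(\prtition))$ contains the origin, together with the invariance of $\lyaplevelset_{max}$, are what keep every trajectory inside $Dom(\prtition)$ so that the inclusion and the certificate remain valid along it.
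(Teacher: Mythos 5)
Your proposal is correct and follows essentially the same route as the paper's proof: establish continuity, positive definiteness, and strict decrease of $V_{\Vprtition}$ from the three groups of constraints, then invoke the nonsmooth Lyapunov and invariance theorems of~\cite{Cortes2008}. The only cosmetic difference is that you inline the Farkas-type certificates (obtaining the slightly sharper uniform bound $p_j^T(A_{ik}\state+a_{ik})\leq -1$), whereas the paper delegates these steps to Lemma~\ref{lem:poslyap}, Lemma~\ref{lem:mainlemma}, and Lemma~\ref{lem:sublemma}.
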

\begin{proof}
See the appendix.
\end{proof}

\section{Control-Oriented Training Using Projected Gradient Descent}
\label{sec:stabalg}	

In this section, we present an algorithm to solve~\eqref{eq:intropgd2}, given a representation of (a subset of) the set $\constraintset$ in terms of~\eqref{eq:lemconstraintsfirst}-\eqref{eq:lemconstraintslast}. %
The constraints~\eqref{eq:lemconstraintsfirst}-\eqref{eq:lemconstraintslast} may be infeasible, since the candidate Lyapunov function (which always exists for suitable partition $\Vprtition$) may not decrease along the dynamics of $\pwlSys_\prtition$. %
To remedy this issue of infeasibility, we relax constraint~\eqref{eq:lemconstraintsdecrease}. %
To account for this relaxation, we modify the objective function. %

The following optimization problem implements~\eqref{eq:intropgd2}:
\begin{IEEEeqnarray}{Cl}
\min_{\cweights, p_i, u_i, v_{ijk}, \lambda_{ij}} & 	\beta \lVert w - w_{k+1}'\rVert_2 +\sum_{(i,j,k) \in I_{dec}} \lVert \slack_{ijk} \rVert_2 \label{opt:lyaprelaxobj}\\
\textrm{s.t.} & \  \nonumber
\end{IEEEeqnarray}
\begin{IEEEeqnarray}{rl}
p_i \hiderel{=} \vMatrix_i^T \mu_i,  &\quad \forall i \in  \Vindex,  \label{opt:lyaprelaxfirst}\\
\mu_i \geq \mathbf{1}, &\quad  \forall i \in  \Vindex, \label{opt:lyaprelaxsecond}\\
\IEEEeqnarraymulticol{2}{l}{
\slack_{ijk} = \bmat{\cMatrix_i(\cweights) & \cVec_i(\cweights) \\ 0 & 1}^T v_{ijk} +\bmat{A_{ik}^T \\ a_{ik}^T} p_j,
} \nonumber\\
  &\quad  \forall (i,j,k) \in \Mode_{dec}, \label{opt:lyaprelaxdec1}\\
v_{ijk} \geq \mathbf{1}, &\quad  \forall (i,j,k) \in  \Mode_{dec},\label{opt:lyaprelaxdec2}\\
p_i - p_{j}  \hiderel{=} \lambda_{ij} \vcVec_{ij},  &\quad  \forall (i,j) \in  \Mode_{cont},\label{opt:lyaprelaxcont1}\\
\lambda_{ij} \geq 1, &\quad  \forall (i,j) \in  \Mode_{cont},  \label{opt:lyaprelaxfinal}
\end{IEEEeqnarray}
\noindent where $\slack_{ijk}$ serve as slack variables that relax the equality constraint~\eqref{eq:lemconstraintsdecrease}, and $\beta \in \R, \beta > 0$ is a weighting factor. %
The optimization variables include $\cweights$, $p_j\ \forall j \in \Vindex$, $v_{ijk}\ \forall (i,j,k) \in I_{dec}$, and $u_i \ \forall i \in \Vindex$. 
The optimization problem~\eqref{opt:lyaprelaxobj}-\eqref{opt:lyaprelaxfinal} has the following property. %
\begin{algorithm}[tb]
    \caption{Projection via Alternate Convex Search}
    \label{alg:projection}
    \begin{algorithmic}
    \REQUIRE  $w_{k+1}'$ from~\eqref{eq:intropgd1}, $\epsilon$, $\beta$ 
    \ENSURE $w_{k+1} \in \constraintset$
    \STATE{$\Delta \gets \infty$}
    \STATE{$ l \gets 0$} \COMMENT{Loop counter}
    \STATE{$w(l) \gets w'_{k+1}$}
    \WHILE{$\Delta  > \epsilon$} 
    \STATE{Compute $\pwlSys_\prtition(l)$ and $V_{\Vprtition}(l)$ using $w(l)$}
    \STATE{$p_j^*, \mu_i^*, \lambda_{ij}^*, v_{ijk}^* \gets$ Solve~\eqref{opt:lyaprelaxobj}-\eqref{opt:lyaprelaxfinal} with $w(l)$ fixed}
    \STATE{$w(l+1) \gets$ Solve~\eqref{opt:lyaprelaxobj}-\eqref{opt:lyaprelaxfinal} with $\mu_i$, $v_{ijk}$, $\lambda_{ij}$ fixed to $\mu_i^*$, $v_{ijk}^*$, and $\lambda_{ij}^*$ respectively}
	\STATE{$\Delta \gets  \lVert w(l+1) - w(l) \rVert$}
	\STATE{$l \gets l+1 $}
	\ENDWHILE
	\STATE{$w_{k+1} \gets w(l)$}
    \RETURN $w_{k+1}$
    \end{algorithmic}
\end{algorithm}
\begin{prop}
\label{prop:feasible}
Let $\pwlSys_{\prtition}$ be a piece-wise affine differential inclusion and $V_{\Vprtition}(\state)$ be a candidate polyhedral Lyapunov function. %
	The optimization problem~\eqref{opt:lyaprelaxobj}-\eqref{opt:lyaprelaxfinal} is feasible. %
\end{prop}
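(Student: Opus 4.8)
The plan is to exhibit a single feasible point rather than to characterize the whole feasible set. The crucial observation is that the slack variables $\slack_{ijk}$ enter the relaxed decrease condition~\eqref{opt:lyaprelaxdec1} in an unconstrained, linear fashion: that equation merely \emph{defines} $\slack_{ijk}$ as the residual of the original (unrelaxed) decrease condition~\eqref{eq:lemconstraintsdecrease}. Hence, for \emph{any} choice of the remaining variables $\cweights$, $p_j$, and $v_{ijk}$, we may simply set $\slack_{ijk}$ equal to the right-hand side of~\eqref{opt:lyaprelaxdec1}, so that constraint is satisfied automatically. In particular the classifier parameters $\cweights$, on which $\cMatrix_i$ and $\cVec_i$ depend, may be fixed arbitrarily (say at $w_{k+1}'$), since the slack absorbs whatever value the corresponding term takes. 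Feasibility of the full program therefore reduces to feasibility of the constraints~\eqref{opt:lyaprelaxfirst},~\eqref{opt:lyaprelaxsecond},~\eqref{opt:lyaprelaxdec2},~\eqref{opt:lyaprelaxcont1}, and~\eqref{opt:lyaprelaxfinal}, none of which involve $\slack_{ijk}$ or $\cweights$.

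Next I would dispatch these remaining constraints. The bound~\eqref{opt:lyaprelaxdec2} is met trivially by taking $v_{ijk} = \mathbf{1}$. The four constraints~\eqref{opt:lyaprelaxfirst},~\eqref{opt:lyaprelaxsecond},~\eqref{opt:lyaprelaxcont1}, and~\eqref{opt:lyaprelaxfinal} involve only $p_i$, $\mu_i$, and $\lambda_{ij}$, and I claim they are precisely the defining conditions of a continuous polyhedral Lyapunov function over the partition $\Vprtition$. Indeed,~\eqref{opt:lyaprelaxfirst}--\eqref{opt:lyaprelaxsecond} encode, via Farkas' lemma, positivity of $V_{\Vprtition}$ on each pointed cone $\qState_i = \{\state : \vMatrix_i \state \geq 0\}$, while~\eqref{opt:lyaprelaxcont1}--\eqref{opt:lyaprelaxfinal} encode continuity of $V_{\Vprtition}$ across each shared facet $\qState_i \cap \qState_j$, whose normal is $\vcVec_{ij}$ (the relation $p_i - p_j = \lambda_{ij}\vcVec_{ij}$ forces $p_i^T\state = p_j^T\state$ wherever $\vcVec_{ij}^T\state = 0$). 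Since $V_{\Vprtition}$ is supplied in the hypothesis as a candidate polyhedral Lyapunov function, its parameters $\{p_i\}$ already furnish values of $\mu_i$ and $\lambda_{ij}$ solving these constraints, which completes the construction of a feasible point.

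The main obstacle I anticipate is the strict normalization in the inequalities $\mu_i \geq \mathbf{1}$ and $\lambda_{ij} \geq 1$, which is stronger than the bare conditions $\mu_i \geq 0$ and $\lambda_{ij}$ free that positivity and continuity alone supply. I would handle this by a scaling argument: replacing $V_{\Vprtition}$ by $c\,V_{\Vprtition}$ for a constant $c > 0$ scales every $p_i$, and hence every $\mu_i$ and every $\lambda_{ij}$, by the same factor $c$; choosing $c$ large enough pushes all the (strictly positive, since $V_{\Vprtition}$ is positive definite on the pointed cones) entries of the $\mu_i$ and the scalars $\lambda_{ij}$ above their thresholds. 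The one point requiring care is the \emph{sign} of each $\lambda_{ij}$, which is pinned down once $\vcVec_{ij}$ is fixed; I would argue that the orientation of $\vcVec_{ij}$ can be chosen consistently with the facet so that each $\lambda_{ij}$ is nonnegative, after which scaling finishes the argument. Because such a feasible point exists for the given $V_{\Vprtition}$ and an arbitrary $\cweights$, the relaxed program~\eqref{opt:lyaprelaxobj}--\eqref{opt:lyaprelaxfinal} is feasible.
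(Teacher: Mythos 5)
Your proposal is correct and follows essentially the same route as the paper's proof: the slack variables $\slack_{ijk}$ render the relaxed decrease constraints trivially satisfiable for any $\cweights$ and $v_{ijk}\geq\mathbf{1}$, and the remaining constraints are feasible because the candidate polyhedral Lyapunov function is continuous and positive definite by construction. The only difference is that you make explicit (via Lemma~\ref{lem:poslyap}, the orientation of $\vcVec_{ij}$, and a scaling of $V_{\Vprtition}$ to meet the normalizations $\mu_i\geq\mathbf{1}$ and $\lambda_{ij}\geq 1$) a step the paper compresses into the phrase ``by construction.''
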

\begin{proof}
The partition $\Vprtition$ allows selection of vectors $\{p_j \}_{j \in \Vindex}$ such that $V_{\Vprtition}(\state)$ is continuous and $V_{\Vprtition}(\state) > 0$ for all $x \neq 0$ by construction. %
This property of $V_{\Vprtition}(\state)$ implies that constraints~\eqref{opt:lyaprelaxfirst}, \eqref{opt:lyaprelaxsecond}, \eqref{opt:lyaprelaxcont1}, and~\eqref{opt:lyaprelaxfinal} are feasible. %
Since $q_{ijk}$ is unconstrainted, \eqref{opt:lyaprelaxdec1} and~\eqref{opt:lyaprelaxdec2} are feasible, independent of the values of the remaining optimization variables. %
\end{proof}

The constraints~\eqref{opt:lyaprelaxfirst}-\eqref{opt:lyaprelaxfinal} are bilinear in the variables of the optimization problem. %
Optimization problems with bilinear constraints are typically NP-hard. %
We use a variant of Alternate Convex Search (ACS)~\cite{Gorski2007} to solve this optimization problem, given in Algorithm~\ref{alg:projection}. %

We begin with the classifier parameters $w'(k+1)$ obtained after a gradient descent step~\eqref{eq:intropgd1}. %
We alternate between solving two convex optimization problems obtained by fixing a  different subset of the variables in~\eqref{opt:lyaprelaxobj}-\eqref{opt:lyaprelaxfinal}. %
We use a value of $\beta \ll 1$ to obtain solutions where the slack variables are zero. %
The convexity and feasibility 
(Proposition~\ref{prop:feasible}) of these problems imply that solutions exist at every iteration. %

The first step of an iteration fixes $w$, and obtain a solution to~\eqref{opt:lyaprelaxobj}-\eqref{opt:lyaprelaxfinal} denoted by $(p_j^*,u_i^*, v_{ijk}^*, \lambda_{ij}^*)$. %
We then solve~\eqref{opt:lyaprelaxobj}-\eqref{opt:lyaprelaxfinal} again, however $\cweights$ is now a variable, and variables $p_j$ remain variable. %
The variables $u_i$,  $v_{ijk}$, and $\lambda_{ij}$ are fixed to the corresponding values $u_i^*$, $v_{ijk}^*$, and $\lambda_{ij}^*$. %
The optimal solution is $(p_j^{**},w^*)$, and $w^*$ is used as the fixed value of $w$ in the next iteration of this procedure. %

Note that the set $\Mode_{dec}$ may change as the partition $\prtition$ changes. %
One way to avoid needing to recompute $\Mode_{dec}$ at every iteration of the Alternate Convex Search is to set $\prtition = \Vprtition$. %
This approach is implicitly taken in~\cite{JohanssonThesis,Blanchini1995}. %


\vspace{-3mm}
\section{Case Study: Path Following}
\label{sec:examplepath}
In our case study, we task a quadrotor equipped with an infra-red-based-scanning device to navigate a canyon-like terrain. %
We use the Gazebo robot simulation environment (see Figure~\ref{fig:valshot}), running on the Robot Operating System, to simulate this scenario. %
We demonstrate that the use of control constraints while training classifiers safeguards against unstable behavior. %
\vspace{-4mm}
\subsection{Modeling}
\label{ssec:modeling}
We model the quadrotor kinematics as a differential-drive like mobile robot. %
That is, we command the quadrotor to achieve has a forward velocity $v$ and an angular velocity $\omega$. %
The simulated quadrotor, however, possesses full inertial and rotational dynamics and implements lower-level controllers to track the commanded velocities, allowing us to abstract away those full dynamics. %

The corridor/canyon defines a path in the plane. %
We can attach a moving coordinate frame, known as a Frenet-Serret frame, to this path, and express the dynamics of the robot within this frame (see Figure~\ref{fig:ddwmr}). %
The configuration of the agent in the Frenet-Serret frame is $\locstate = (\localangle, \deviation)$, where angle $\localangle$ is the heading of the robot with respect to the path-aligned axis of the frame, and offset $\deviation$ is the distance between the robot's location and the origin of the frame (which lies on the path). %
The origin $\locstate = 0$ corresponds to the robot being on the path with its heading aligned with the path tangent. 

The robot uses three control inputs: $\control_1=\bmat{v^*&0}^T$, $\control_2=\bmat{0&\omega^*}^T$, and $\control_3=\bmat{0&-\omega^*}^T$, where $v^* >0$ and $\omega^* > 0$ are constants, so that $U =\lblSet = \{\control_1,\control_2,\control_3 \}$. %
These vectors correspond to moving forward, turning left, and turning right respectively. %
The dynamics under each constant input $\control_\mode \in \Control$ in the local Frenet-Serret frame are given by
\begin{dmath*}
f(\state, \control_1) =\bmat{ \frac{v^*  \exaenvir \cos(\localangle)}{1 - \exaenvir \deviation }  \\  v^* \sin(\localangle) },\ f(\state, \control_2) \hiderel{=} 	\bmat{ \omega^*\\ 0},\textrm{ and } f(\state, \control_3) \hiderel{=} 	\bmat{ -\omega^*\\ 0},
\end{dmath*}
\noindent where $\exaenvir$ is the (unknown) local curvature of the path. %

We approximate the nonlinear dynamics $f(x,u_1)$ by the affine set-valued map $\diffinclusion_1$ given by
\begin{equation*}
\diffinclusion_1 =   {co}_{{\rho \in P}} \left( \bmat{0 & -\exaenvir v^{*}\\v^{*} & 0 } \locstate +  \bmat{v^{*} \exaenvir \\0} \right),
\end{equation*}
\noindent where $\Penvir \subset \R$ is a closed compact set that captures the variation in curvature considered for analysis. %
The dynamics $f(x,u_2)$ and $f(x,u_3)$ are affine and single-valued, so that
\begin{dmath*}
{ \diffinclusion_2 = f(x,u_2) \textrm{ and } \diffinclusion_3 = f(x,u_3).	}
\end{dmath*}
\vspace{-4mm}
\subsection{Training Data and Classification}
The training data $\trdata$ consists of triples $(\locstate^k,\measurement^k,\clabel^k)$, where $x^k = (\localangle^k, \deviation^k)$, $\deviation^k \in \{0.5\textrm{~m}, 0\textrm{~m}, -0.5\textrm{~m}\}$ and $\localangle^k \in \{\pi/6\textrm{~rad}, 0\textrm{~rad}, -\pi/6\textrm{~rad}  \}$. %
The measurement $\measurement$ is a vector of dimension $420$. %
The data points $\state^k$ for which $\deviation^k = 0$ and $\localangle^k$ is $\pi/6\textrm{~rad}$, $0\textrm{~rad}$, or $-\pi/6\textrm{~rad}$ are labeled as $\control_3$, $\control_1$, and $\control_2$ respectively. 
We collect this data in a path that has zero curvature. 
The entire data set is used to estimate $\Hselection$, using polynomial regression. %
We take $v^*$ and $\omega^*$ to be $0.5\textrm{~m/s}$ and $0.15\textrm{~rad/s}$ respectively. %
\begin{figure}[tb]
\centering
\includegraphics[width = 0.37\textwidth]{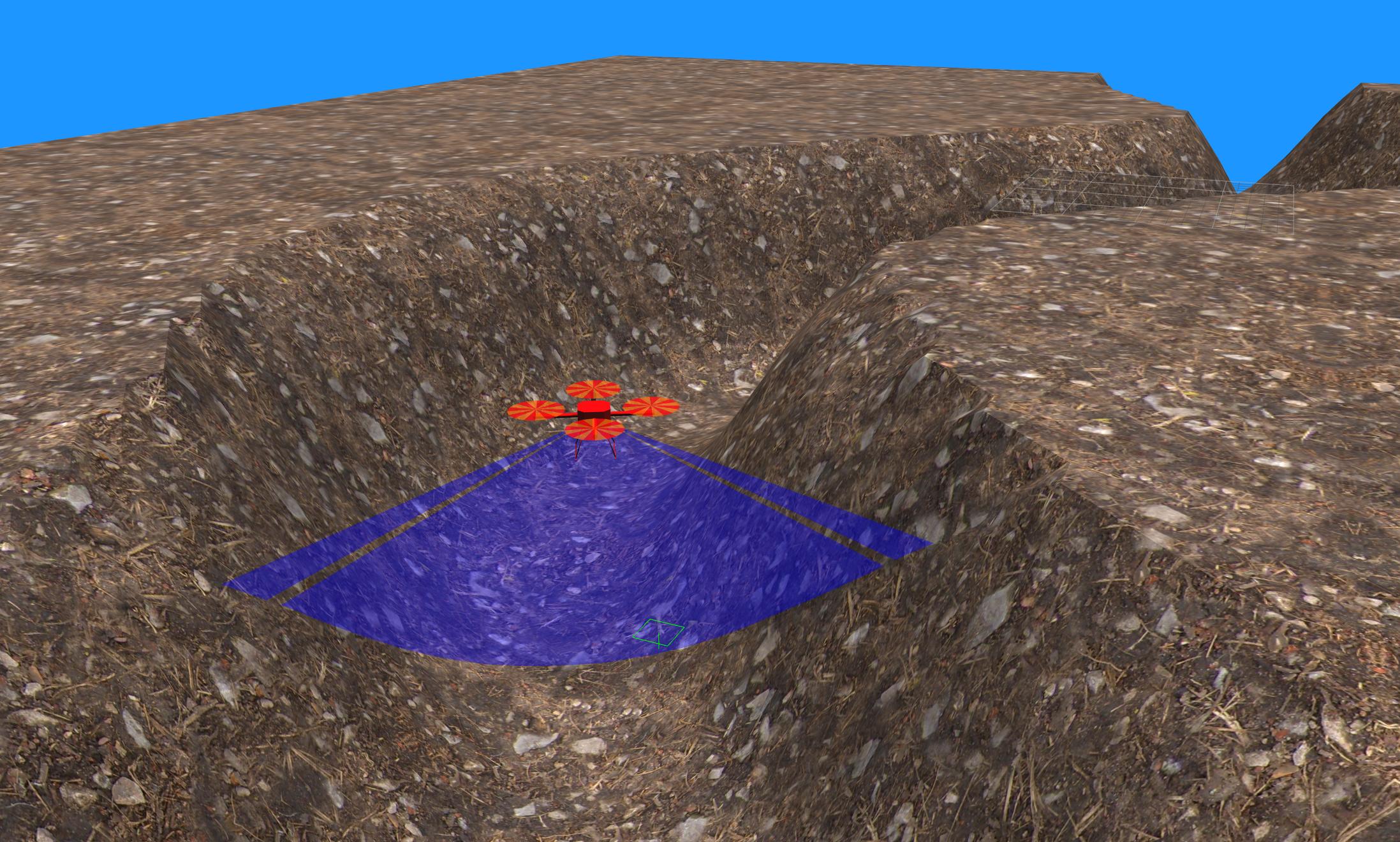}
\caption{\small [Figure best viewed in color.] Screenshot of a simulation in ROS Gazebo of a quadrotor (in red) navigating a simulated terrain. The quadrotor uses a laser scanner to sense the terrain and navigates by classifying measurements, without maintaining a state estimate. The field-of-view of the sensor is depicted by the dark blue region.}
\label{fig:valshot}
\vspace{-3mm}
\end{figure}

\begin{figure}[tb]
\centering
\begin{tikzpicture}[scale=1.0]
\coordinate (orig) at (0,0);
\def\xaxissize{1.5}
\def\yaxissize{1.5}
\def\robotrotate{60}
\def\axisangle{-65}
\coordinate (fvelbase) at ($(orig)+(0+\robotrotate+\axisangle:0.85*\xaxissize)$);
\draw[->,color=blue,line width = 0.5mm] (fvelbase) -- ($(fvelbase)+(0+\robotrotate:1.0*\xaxissize)$) node[right,blue,text width = 2cm] {forward velocity $v$};
\draw[->,dashed] (fvelbase) -- ($(fvelbase)+(0+\robotrotate:0.75*\xaxissize)$) arc (0+\robotrotate:\robotrotate+\axisangle+90:0.75*\xaxissize);
\node[->,dashed] at ($(fvelbase)+(0+\robotrotate+15:0.9*\xaxissize)$) {$\localangle$};
\draw[->,orange,line width = 0.5mm] ($(orig)+(0,0) + (60+\robotrotate:1.0*\xaxissize)$) arc (60+\robotrotate:190+\robotrotate:1.0*\xaxissize) node[right,orange,text width = 2cm] {angular velocity $\omega$};
\coordinate (fsorig) at ($(orig)+(0,0)+ (\robotrotate+\axisangle:2.5*\xaxissize)$);
\draw[dashed] (orig) -- (fsorig);
\coordinate (quad) at (0,0);
\def\quadrot{15}
\def\quadbladesize{0.25*\xaxissize}
\foreach \x in {0,90,180,270}
{
\draw[thick] (quad) -- ($(quad) + (\quadrot+\x:0.25)$);
\draw[thick]  ($(quad) + (\quadrot+\x:0.25+\quadbladesize)$) circle (\quadbladesize);
}
\coordinate (cam1) at ($(quad)+(\quadrot+45:0.2)$);
\coordinate (cam2) at ($(cam1)+(\quadrot+45+30:0.3)$);
\coordinate (cam3) at ($(cam1)+(\quadrot+45-30:0.3)$);
\draw[fill = red] (cam1) -- (cam2) -- (cam3) -- (cam1);
\path[dashed,draw] (fvelbase) -- +(\robotrotate+\axisangle+90:1.0*\xaxissize);
\path[thick,draw] ($(fsorig)+0.5*(\xaxissize,0)-0.8*(0,\yaxissize)$) to [out = 120, in = \robotrotate+\axisangle-90] (fsorig) to [out =  \robotrotate+\axisangle+90, in = 180] ($(fsorig)+0.5*(\xaxissize,0)+1.0*(0,\yaxissize)$) node[below] {path};
\draw[very thick,->,red] (fsorig) -- ($(fsorig)+(\robotrotate+\axisangle+180:0.95*\xaxissize )$);
\draw[very thick,->,red] (fsorig) -- ($(fsorig)+(\robotrotate+\axisangle+90:0.95*\xaxissize )$);
\draw[very thick,<->,purple] ($(fsorig)+(\robotrotate+\axisangle-90:0.35)$) -- ($(orig)+(\robotrotate+\axisangle-90:0.35)$);
\node[purple] at ($0.5*(fsorig)+0.5*(orig)+(\robotrotate+\axisangle-90:0.6)$) {$\deviation$};
\end{tikzpicture}
\caption{\small A quadrotor with forward speed $v$, and angular velocity $\omega$. The curved black line represents a local segment of the path that the quadrotor must follow. The local Frenet-Serret frame (red) attached to the path is also shown. The quadrotor's state consists of the offset $\deviation$ and angle $\localangle$ with respect to the path.}
\label{fig:ddwmr}
\vspace{-4mm}
\end{figure}
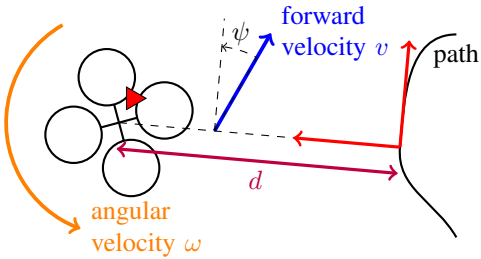 
In the rest of this section, we obtain a classifier $\classifier$
by training three one-vs-one classifiers $w^{12}$, $w^{13}$ and $w^{23}$ that distinguish between $\control_1$ and $\control_2$, $\control_1$ and $\control_3$, and $\control_2$ and $\control_3$ respectively. %
The loss function is
\vspace{-1mm}
\begin{dmath}
l_{data}(\cweights) =\lVert \cweights \rVert_2 + \gamma \sum_{k=1}^{N_\trdata} \max(0,1 - b^k \measurement^k), \label{eq:svmloss}
\end{dmath}
\vspace{-1mm}
\noindent where $\gamma > 0$ is a parameter we set as $100$. %
The loss~\eqref{eq:svmloss} implements a support vector machine. %
The class labels are then given by
\begin{dmath*}
C(\measurement) = \begin{cases}
 \control_2 & \textrm{ if } (\cweightsm^{12})^T y + \ccon^{12} <0 , (\cweightsm^{23})^T y + \ccon^{23}  > 0, \\
  \control_3 & \textrm{ if }  (\cweightsm^{13})^T y + \ccon^{13} <0 , (\cweightsm^{23})^T y + \ccon^{23}  < 0,  \\
   \control_1 & \textrm{ otherwise}.
 \end{cases}
\end{dmath*}

\subsection{Control-Oriented Training}
\label{ssec:pathtraining}
Let $\classifier_0$ be the classifier obtained when training on data without control-oriented constraints. %
We sketch the closed-loop system due to $\classifier_0$ in Figure~\ref{fig:closedloopsketch}. %
The points $x^1_e$ and $x_e^2$ are switched equilibria~\cite{Filippov1988} when the curvature of the path is strictly positive and negative respectively. %
When the curvature is zero, every point on the line $\localangle = 0$ between $x^1_e$ and $x_e^2$ is an equilibrium point. %
All trajectories either begin on this equilibrium set, or approach either $x^1_e$ or $x_e^2$. %
This analysis was presented in~\cite{PoonawalaCDC17} to explain the work in~\cite{Giusti16}. %

To demonstrate the need for control-oriented training, we mislabel the training data, and train two sets of classifiers $\classifier_1$ and $\classifier_2$ using this mislabeled data. %
Specifically, we use the data corresponding to $(\localangle^k,\deviation^k) = (\pi/6\textrm{ rad},0.5\textrm{ m}) $ as $\control_1$, instead of the data corresponding to $(\localangle^k,\deviation^k) = (0\textrm{ rad},0\textrm{ m}) $. %
We train $\classifier_1$ by using gradient-descent to minimize~\eqref{eq:svmloss} on the mislabeled data. %

We train  $\classifier_2$ so that a given point $x_e^1$, at which $\localangle = 0$ and $\deviation >0$, is a locally asymptotically stable equilibrium when the path curvature is positive. Similarly, we want a point $x_e^2$, at which $\localangle = 0$ and $\deviation <0$, to be a locally asymptotic equilibrium point when the path curvature is negative. 
We achieve this training by solving the constrained optimization formulation~\eqref{eq:intropgd1} and~\eqref{eq:intropgd2} to minimize~\eqref{eq:svmloss} on the \emph{same data} as $\classifier_1$, but subject to the following constraints. %
We constrain $w^{13}$ so that $(w_1^{13})^T \Hsetfn(x_e^1) +w_0^{13}= 0$. %
We use a Lyapunov function $V_{\Vprtition_1}(\state - x_e^1)$ to ensure that the switching between $\diffinclusion_1$ and $\diffinclusion_3$ renders $x_e^1$ to be (locally) asymptotically stable. %
The partition $\Vprtition_1$ comprises of $16$ cells depicted in Figure~\ref{fig:lyappartition}. %
The dynamics $\diffinclusion_1$, $\diffinclusion_2$, and $\diffinclusion_3$ are as in Section~\ref{ssec:modeling}, where $\rho = 1\textrm{m}$. %
We solve the projection step~\eqref{eq:intropgd2} using Algorithm~\ref{alg:projection}, with $\beta = 0.001$. %
Similarly, we train $w^{12}$ so that $x_e^2$ is a switched equilibrium and $\rho = -1\textrm{m}$, with a different Lyapunov function $V_{\Vprtition_2}(\state - x_e^2)$ as proof of local asymptotic stability. %
We train $w^{23}$ without any constraints, using loss function~\eqref{eq:svmloss}. %
\begin{figure}
\begin{tikzpicture}
\tikzstyle{myedge}=[shorten >=1pt,auto,semithick]
\tikzstyle{axes}=[->,opacity = 1.0]
\tikzstyle{mysubcaption}=[text width = 4cm, align = center]
\def \xaxissize {1.7};
\def \yaxissize {1.7};
\def \lowoffset {1.0}
\def \uppoffset {0.5}
\coordinate (or1) at (0,0);
\coordinate (lowpoint) at ($(or1)+(\lowoffset, -\yaxissize )$);
\coordinate (uppoint) at ($(or1)+(-\xaxissize,\uppoffset )$);
\coordinate (c1) at ($(or1)+(\xaxissize,\yaxissize)$);
\coordinate (c2) at ($(or1)+(-\xaxissize,\yaxissize)$);
\coordinate (c3) at ($(or1)+(-\xaxissize,-\yaxissize)$);
\coordinate (c4) at ($(or1)+(\xaxissize,-\yaxissize)$);
\coordinate (cS) at ($(or1)+(0,-\yaxissize)$);
\coordinate (cN) at ($(or1)+(0,\yaxissize)$);
\coordinate (sua) at ($(or1)+(-1,\yaxissize)$);
\coordinate (sub) at ($(or1)+(\xaxissize,-0.4)$);
\coordinate (sla) at ($(or1)+(-\xaxissize,-0.1)$);
\coordinate (slb) at ($(or1)+(0.5,-\yaxissize)$);
\path[draw, name path = AA,fill = blue!25!white] (sua) -- (sub) -- (c4) -- (slb) -- (sla) -- (c2) -- (sua);
\path[draw, name path = BB,fill = green!25!white] (sua) -- (c1) -- (sub) -- (sua);
\path[draw, name path = CC,fill = red!25!white] (sla) -- (c3) -- (slb) -- (sla);
\path[ name path = vv] ($(or1)+(-90:\yaxissize)$) -- ($(or1)+(90:\yaxissize)$);
\path[name intersections={ of=vv and AA}];
\coordinate (xe1) at (intersection-1);
\coordinate (xe2) at (intersection-2);

\draw[axes] ($(or1)-(0:\xaxissize)$) -- ($(or1)+(0:\xaxissize)$) node [anchor = north west] {$\localangle$};
\draw[axes] ($(or1)-(90:\yaxissize)$) -- ($(or1)+(90:\yaxissize)$) node [anchor = south east] {$\deviation $};
\node[below,mysubcaption] at ($(0,-\yaxissize )+(or1)$){\footnotesize a) Closed-loop system $\pwlSys_\prtition$ when $\exaenvir = 0$.};
\foreach \x in {0.9,0.7,0.5,0.3,0.1}
{
\path[ name path = temp1] ($(or1)+(-\xaxissize, \x*\yaxissize)$) -- ($(or1)+(\xaxissize, \x*\yaxissize)$);
\path[name intersections={ of=temp1 and AA}];
\coordinate (xet) at (intersection-1);
\path[ draw,->] ($(or1)+(\xaxissize, \x*\yaxissize)+ (-0.1,0)$) -- ( $(xet) + (0.1,0)$);
}
\foreach \x in {0.9,0.7,0.5,0.3,0.15}
{
\path[ name path = temp1] ($(or1)+(-\xaxissize, -\x*\yaxissize)$) -- ($(or1)+(\xaxissize, -\x*\yaxissize)$);
\path[name intersections={ of=temp1 and AA}];
\coordinate (xet) at (intersection-2);
\path[ draw,->] ($(or1)+(-\xaxissize, -\x*\yaxissize)+ (0.1,0)$) -- ( $(xet) + (-0.1,0)$);
}

\foreach \x in {0.9,0.7,0.5,0.3,0.1}
{
\path[ name path = temp1] ($(or1)+(\x*\xaxissize, \yaxissize)$) -- ($(or1)+(\x*\xaxissize, -\yaxissize)$);
\path[name intersections={ of=temp1 and AA}];
\coordinate (xet1) at (intersection-1);
\coordinate (xet2) at (intersection-2);
\path[ draw,->] ($(xet2) + (0,0.1)$) -- ( $(xet1) + (0,-0.1)$);
}
\foreach \x in {0.5,0.3,0.1}
{
\path[ name path = temp1] ($(or1)+(-\x*\xaxissize, \yaxissize)$) -- ($(or1)+(-\x*\xaxissize, -\yaxissize)$);
\path[name intersections={ of=temp1 and AA}];
\coordinate (xet1) at (intersection-1);
\coordinate (xet2) at (intersection-2);
\path[ draw,->] ($(xet1) + (0,-0.1)$) -- ( $(xet2) + (0,0.1)$);
}
\foreach \x in {0.9,0.7}
{
\path[ name path = temp1] ($(or1)+(-\x*\xaxissize, \yaxissize)$) -- ($(or1)+(-\x*\xaxissize, -\yaxissize)$);
\path[name intersections={ of=temp1 and AA}];
\coordinate (xet1) at (intersection-1);
\coordinate (xet2) at (intersection-2);
\path[ draw,->] ($(xet2) + (0,-0.1)$) -- ( $(xet1) + (0,+0.1)$);
}

\node[above right,fill = green!25!white] at (xe1) {$x_e^1$};
\path[draw,red,thick] (xe1) -- (xe2);
\draw[fill = red] (xe1) circle (0.05) ;
\node[above right,fill = blue!25!white] at (xe2) {$x_e^2$};
\draw[fill = red] (xe2) circle (0.05) ;

\node[fill = red!25!white] at ($(or1) - (1.2,1.2)$) {$\diffinclusion_2$};
\node[fill = green!25!white] at ($(or1) + (1,1)$) {$\diffinclusion_3$};
\node[fill = blue!25!white] at ($(or1) + 1.1*(1.1,-1)$) {$\diffinclusion_1$};
\coordinate (or1) at (4.2,0);
\coordinate (lowpoint) at ($(or1)+(\lowoffset, -\yaxissize )$);
\coordinate (uppoint) at ($(or1)+(-\xaxissize,\uppoffset )$);
\coordinate (c1) at ($(or1)+(\xaxissize,\yaxissize)$);
\coordinate (c2) at ($(or1)+(-\xaxissize,\yaxissize)$);
\coordinate (c3) at ($(or1)+(-\xaxissize,-\yaxissize)$);
\coordinate (c4) at ($(or1)+(\xaxissize,-\yaxissize)$);
\coordinate (cS) at ($(or1)+(0,-\yaxissize)$);
\coordinate (cN) at ($(or1)+(0,\yaxissize)$);
\coordinate (sua) at ($(or1)+(-1,\yaxissize)$);
\coordinate (sub) at ($(or1)+(\xaxissize,-0.4)$);
\coordinate (sla) at ($(or1)+(-\xaxissize,-0.1)$);
\coordinate (slb) at ($(or1)+(0.5,-\yaxissize)$);
\path[draw, name path = AA,fill = blue!25!white] (sua) -- (sub) -- (c4) -- (slb) -- (sla) -- (c2) -- (sua);
\foreach \x in {0.9,0.7,0.5,0.3,0.15}
{
\path[ name path = temp1,draw] ($(or1)+(-\x*\xaxissize,\yaxissize)$) to [out = -85,in = 180] ($(or1)+(0,\yaxissize-2*\x*\yaxissize)$) to [out = 0,in = -95] ($(or1)+(\x*\xaxissize,\yaxissize)$);
\path[ name path = temp1,draw,->] ($(or1)+(-\x*\xaxissize,\yaxissize)$) to [out = -85,in = 180] ($(or1)+(0,\yaxissize-2*\x*\yaxissize)$);

}

\path[draw, name path = BB,fill = green!25!white] (sua) -- (c1) -- (sub) -- (sua);
\path[draw, name path = CC,fill = red!25!white] (sla) -- (c3) -- (slb) -- (sla);
\path[ name path = vv] ($(or1)+(-90:\yaxissize)$) -- ($(or1)+(90:\yaxissize)$);
\path[name intersections={ of=vv and AA}];
\coordinate (xe1) at (intersection-1);
\coordinate (xe2) at (intersection-2);

\draw[axes] ($(or1)-(0:\xaxissize)$) -- ($(or1)+(0:\xaxissize)$) node [anchor = north west] {$\localangle$};
\draw[axes] ($(or1)-(90:\yaxissize)$) -- ($(or1)+(90:\yaxissize)$) node [anchor = south east] {$\deviation$};
\node[below,mysubcaption] at ($(0,-\yaxissize )+(or1)$){\footnotesize b) Closed-loop system $\pwlSys_\prtition$ when $\exaenvir > 0$.};

\foreach \x in {0.9,0.7,0.5,0.3,0.1}
{
\path[ name path = temp1] ($(or1)+(-\xaxissize, \x*\yaxissize)$) -- ($(or1)+(\xaxissize, \x*\yaxissize)$);
\path[name intersections={ of=temp1 and AA}];
\coordinate (xet) at (intersection-1);
\path[ draw,->] ($(or1)+(\xaxissize, \x*\yaxissize)+ (-0.1,0)$) -- ( $(xet) + (0.1,0)$);
}
\foreach \x in {0.9,0.7,0.5,0.3,0.15}
{
\path[ name path = temp1] ($(or1)+(-\xaxissize, -\x*\yaxissize)$) -- ($(or1)+(\xaxissize, -\x*\yaxissize)$);
\path[name intersections={ of=temp1 and AA}];
\coordinate (xet) at (intersection-2);
\path[ draw,->] ($(or1)+(-\xaxissize, -\x*\yaxissize)+ (0.1,0)$) -- ( $(xet) + (-0.1,0)$);
}

\node[above right,fill = green!25!white] at (xe1) {$x_e^1$};
\draw[fill = red] (xe1) circle (0.05) ;

\node[fill = red!25!white] at ($(or1) - (1.2,1.2)$) {$\diffinclusion_2$};
\node[fill = green!25!white] at ($(or1) + (1,1)$) {$\diffinclusion_3$};
\node[fill = blue!25!white] at ($(or1) + 1.1*(1.1,-1)$) {$\diffinclusion_1$};	

\end{tikzpicture}
\caption{\small Sketch of closed-loop dynamics due to classifier $\classifier_0$ for path following for different curvatures. Points $x_e^1$ and $x_e^2$ are switched equilibria~\cite{Filippov1988} when curvature $\rho$ is positive and negative respectively. Their convex hull forms a line of equilibria when $\rho = 0$. }
\label{fig:closedloopsketch}
\vspace{-4mm}
\end{figure}
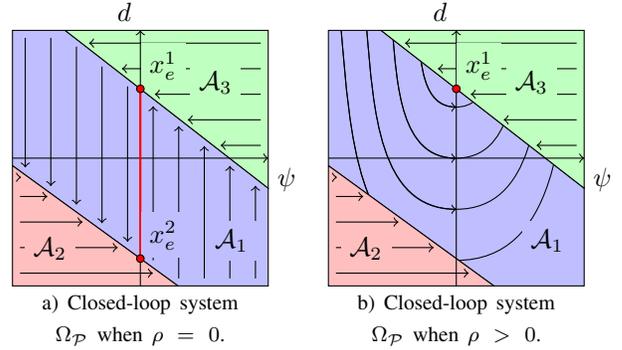
\vspace{-2mm}
\subsection{Results}
We simulate the path-following control of a quadrotor when using $\classifier_1$ and $\classifier_2$ (separately). %
Figure~\ref{fig:stdsvmstraight} shows the resulting trajectories, in local coordinates. %
We see that for the classifier $\classifier_2$ trained with control constraints, the trajectories reach the set of equilibria points between $x_e^1$ and $x_e^2$. %
The switching surfaces are similar to those in Figure~\ref{fig:closedloopsketch}a.
For the classifier $\classifier_1$ trained without constraints on $\cweights$, some trajectories move away from the origin, in fact the quadrotor crashes in the simulation. %
In the remaining trajectories the quadrotor is reaches a switching surface between turning left and turning right, and consequently oscillates between the two without moving along the path. %
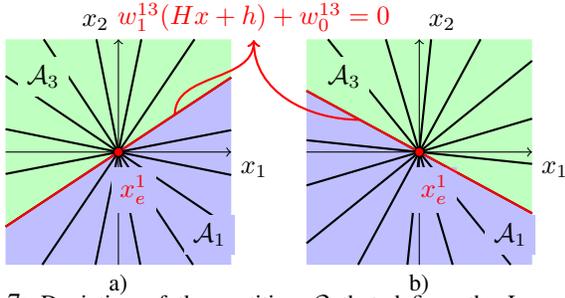
\begin{figure}[tb]
\centering
\begin{tikzpicture}
\tikzstyle{axes}=[->,opacity = 1.0]
\tikzstyle{mysubcaption}=[text width = 4cm, align = center]
\def \xaxissize {1.5};
\def \yaxissize {1.5};
\coordinate (or1) at (0,0);
\coordinate (c1) at ($(or1)+(\xaxissize,\yaxissize)$);
\coordinate (c2) at ($(or1)+(-\xaxissize,\yaxissize)$);
\coordinate (c3) at ($(or1)+(-\xaxissize,-\yaxissize)$);
\coordinate (c4) at ($(or1)+(\xaxissize,-\yaxissize)$);
\coordinate (cS) at ($(or1)+(0,-\yaxissize)$);
\coordinate (cN) at ($(or1)+(0,\yaxissize)$);
\path[name path = AA] (c1) -- (c2) -- (c3) -- (c4) -- (c1);
\def\classangle{11}
\path[name path = ray] (or1) -- ($(or1)+(1*22.5+\classangle:1.2*\xaxissize)$);
\path[name intersections={ of=ray and AA}];
\coordinate (xeta) at (intersection-1);
\path[name path = ray] (or1) -- ($(or1)+(9*22.5+\classangle:1.2*\xaxissize)$);
\path[name intersections={ of=ray and AA}];
\coordinate (xeta2) at (intersection-1);
\fill[green!25!white] (xeta) -- (c1) -- (c2) -- (xeta2) -- (xeta);
\fill[blue!25!white] (xeta) -- (c4) -- (c3) -- (xeta2) -- (xeta);
\draw[axes] ($(or1)-(0:\xaxissize)$) -- ($(or1)+(0:\xaxissize)$) node [anchor = north west] {$\state_1$};
\draw[axes] ($(or1)-(90:\yaxissize)$) -- ($(or1)+(90:\yaxissize)$) node [anchor = south east] {$\state_2 $};
\foreach \x in{1,...,16}
{
\path[name path = ray] (or1) -- ($(or1)+(\x*22.5+\classangle:1.3*\xaxissize)$);
\path[name intersections={ of=ray and AA}];
\coordinate (xet) at (intersection-1);
\path[name path = ray] (or1) -- ($(or1)+(\x*22.5+22.5+\classangle:1.3*\xaxissize)$);
\path[draw,thick] (or1) -- (xet);
}
\node[fill = green!25!white] at ($(or1) + (-1,1)$) {$\diffinclusion_3$};
\node[fill = blue!25!white] at ($(or1) + 1.1*(1.1,-1)$) {$\diffinclusion_1$};
\node[red,fill = blue!25!white] at ($(or1) + (0.2,-0.5)$) {$x_e^1$};
\draw[red,thick] (xeta) -- (xeta2);
\draw[fill=red] (or1) circle (0.06);
\node[red] (surfname) at ($(or1) + (1.2*\xaxissize,1.2*\yaxissize)$) {$w^{13}_1( H x +h) + w_0^{13} = 0$};
\path[draw,thick,red,->] ($(or1)+(22.5+\classangle:0.6*\xaxissize)$) to [out=90,in=-90] (surfname);
\node[below, mysubcaption] at ($(0,-\yaxissize )+(or1)$){\small a)};

\def \xaxissize {1.5};
\def \yaxissize {1.5};
\coordinate (or1) at (4,0);
\coordinate (c1) at ($(or1)+(\xaxissize,\yaxissize)$);
\coordinate (c2) at ($(or1)+(-\xaxissize,\yaxissize)$);
\coordinate (c3) at ($(or1)+(-\xaxissize,-\yaxissize)$);
\coordinate (c4) at ($(or1)+(\xaxissize,-\yaxissize)$);
\coordinate (cS) at ($(or1)+(0,-\yaxissize)$);
\coordinate (cN) at ($(or1)+(0,\yaxissize)$);
\path[name path = AA] (c1) -- (c2) -- (c3) -- (c4) -- (c1);
\def\classangle{-51}
\path[name path = ray] (or1) -- ($(or1)+(1*22.5+\classangle:1.3*\xaxissize)$);
\path[name intersections={ of=ray and AA}];
\coordinate (xeta) at (intersection-1);
\path[name path = ray] (or1) -- ($(or1)+(9*22.5+\classangle:1.3*\xaxissize)$);
\path[name intersections={ of=ray and AA}];
\coordinate (xeta2) at (intersection-1);
\fill[green!25!white] (xeta) -- (c1) -- (c2) -- (xeta2) -- (xeta);
\fill[blue!25!white] (xeta) -- (c4) -- (c3) -- (xeta2) -- (xeta);
\draw[axes] ($(or1)-(0:\xaxissize)$) -- ($(or1)+(0:\xaxissize)$) node [anchor = north west] {$\state_1$};
\draw[axes] ($(or1)-(90:\yaxissize)$) -- ($(or1)+(90:\yaxissize)$) node [anchor = south west] {$\state_2 $};
\foreach \x in{1,...,16}
{
\path[name path = ray] (or1) -- ($(or1)+(\x*22.5+\classangle:1.3*\xaxissize)$);
\path[name intersections={ of=ray and AA}];
\coordinate (xet) at (intersection-1);
\path[name path = ray] (or1) -- ($(or1)+(\x*22.5+22.5+\classangle:1.3*\xaxissize)$);
\path[draw,thick] (or1) -- (xet);
}
\node[fill = green!25!white] at ($(or1) + (-1,1)$) {$\diffinclusion_3$};
\node[fill = blue!25!white] at ($(or1) + 1.1*(1.1,-1)$) {$\diffinclusion_1$};
\node[red,fill = blue!25!white] at ($(or1) + (0.2,-0.5)$) {$x_e^1$};
\draw[red,thick] (xeta) -- (xeta2);
\draw[fill=red] (or1) circle (0.06);
\path[draw,thick,red,->] ($(or1)+(22.5+\classangle:-0.6*\xaxissize)$) to [out=180,in=-90] (surfname);
\node[below, mysubcaption] at ($(0,-\yaxissize )+(or1)$){\small b)};
\end{tikzpicture}
\vspace{-4mm}
\caption{\small Depiction of the partition $\Vprtition$ that defines the Lyapunov function $V_{\Vprtition}(\state)$ used to train $\cweights^{13} = (\cweights_1^{13},\cweights^{13}_0)$. Note that $\prtition = \Vprtition$, and $\Vprtition$ depends on the classifier parameters. The point $x_e^1$ is not asymptotically stable for the switching dynamics in a), but it is for that in b). }
\label{fig:lyappartition}
\vspace{-2mm}
\end{figure}
\begin{figure}
\centering
\begin{subfigure}{0.24\textwidth}
\centering
    \includegraphics[width=0.99\linewidth]{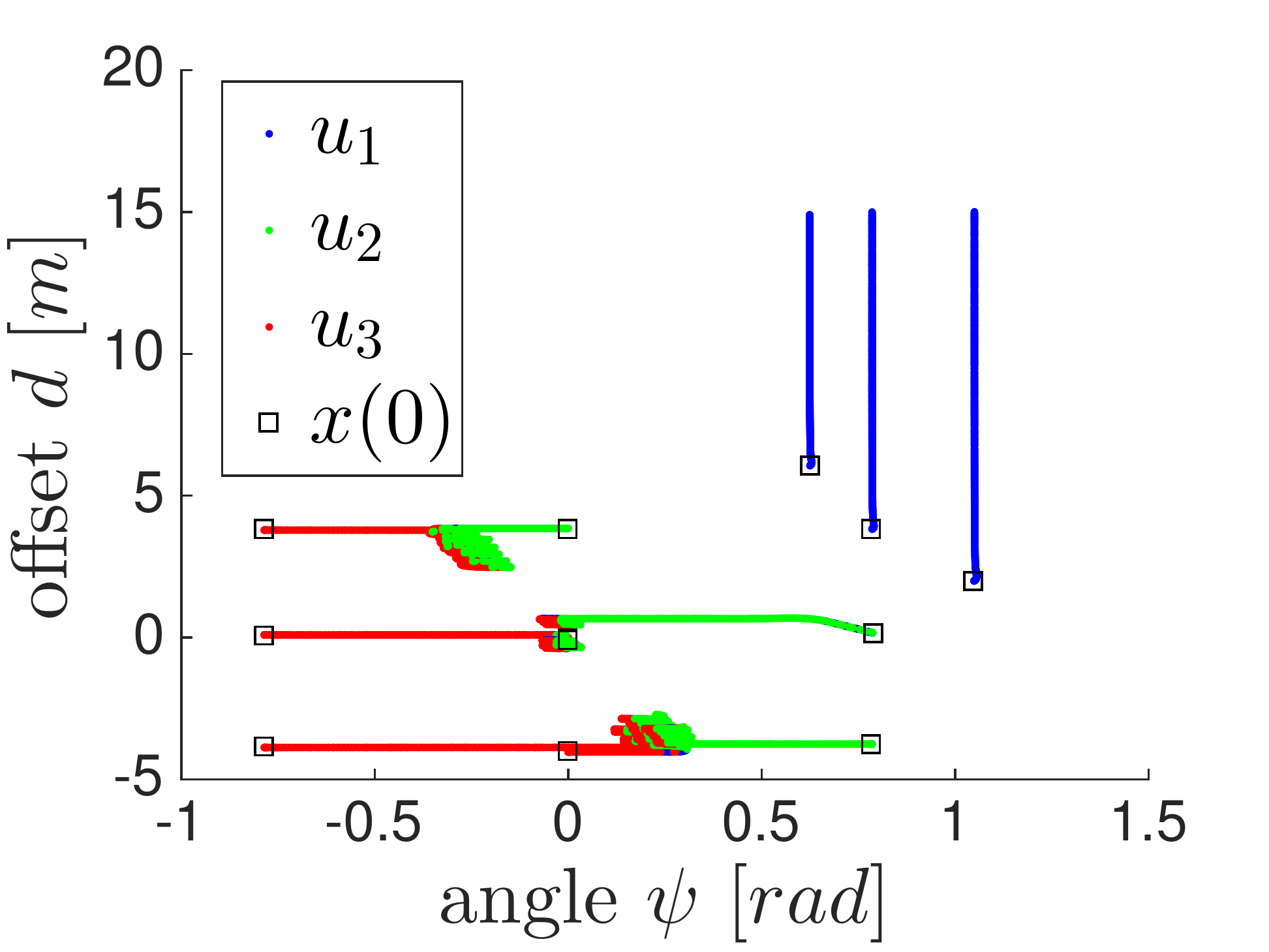}
    \caption{Classifier $\classifier_1$}
    \label{fig:traincanyon}
\end{subfigure}%
\begin{subfigure}{0.24\textwidth}
\centering
    \includegraphics[width=0.99\linewidth]{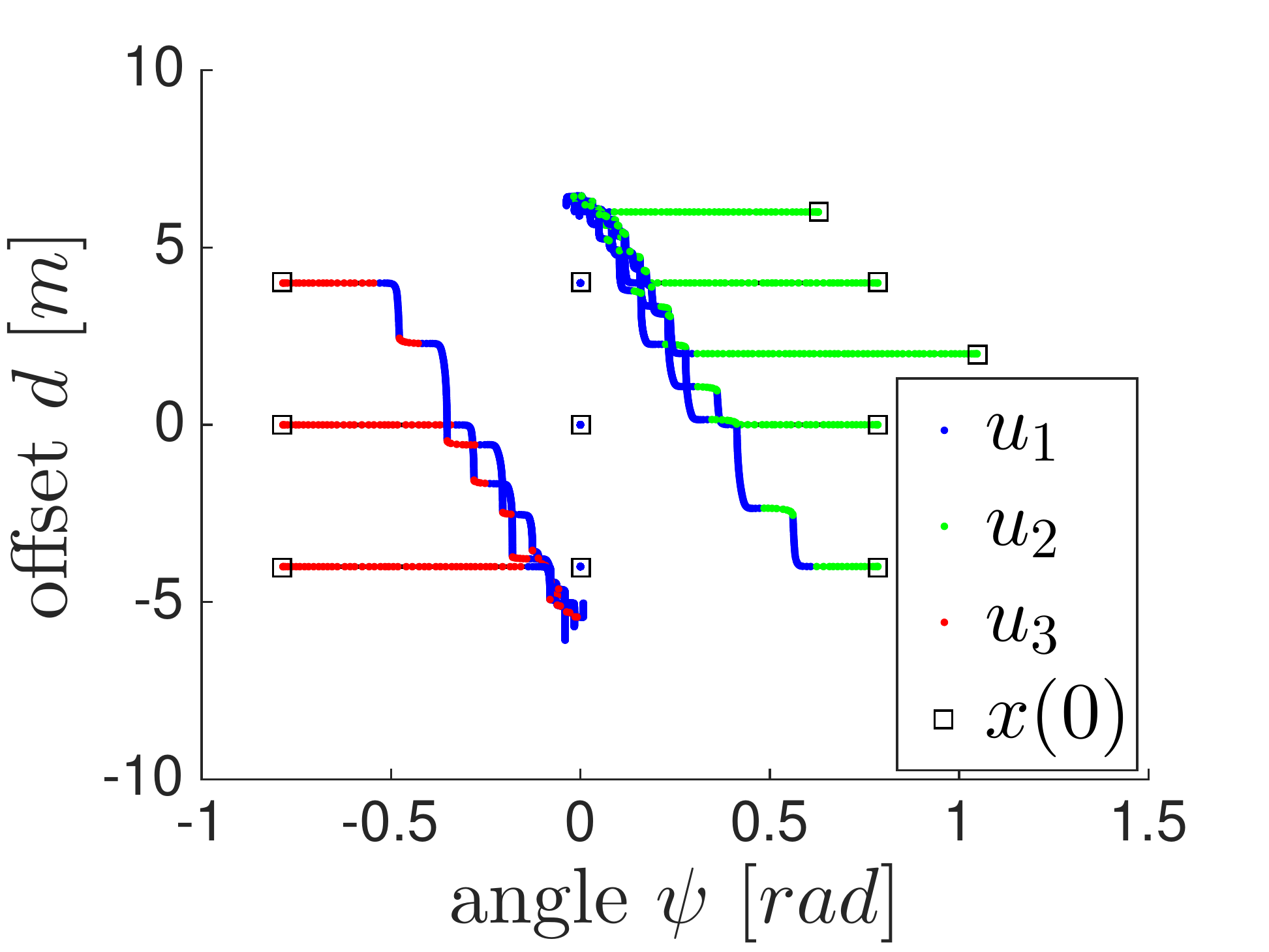}
    \caption{Classifier $\classifier_2$}
    \label{fig:testcanyon}
\end{subfigure}
\caption{\small Trajectories of the quadrotor, in local coordinates, when following a path using classifiers trained with the same data but different methods (see Section~\ref{ssec:pathtraining}). a) The trajectories due to classifier $\classifier_1$, trained without control considerations, either exhibit oscillations due to switching between turning right and left, or the quadrotor moves away from the center of the path towards the sides, eventually crashing. b) The trajectories due to classifier $\classifier_2$, trained with control constraints, approach the set of equilibria between the two points $x_e^1$ and $x_e^2$ (see Figure~\ref{fig:closedloopsketch}a) which exist by design of the classifier (see Section~\ref{ssec:pathtraining}).}\label{fig:stdsvmstraight}
\end{figure}
%
\vspace{-1mm}
\section{Conclusions And Future Work}
We have presented a novel training algorithm for classifiers that incorporate control-oriented constraints on the classifier parameters. %
We derived these constraints by modeling the closed-loop system as a piece-wise affine differential inclusion, and using polyhedral Lyapunov functions to verify desired closed-loop properties. %
We show the usefulness of this novel training method in a simulation of a quadrotor navigating terrain by classifying high-dimensional sensor measurements into one of three possible velocities. %

While we have demonstrated the value of the proposed training method through our case study, the method presents some issues to be addressed. %
Our method requires us to derive a piecewise affine differential inclusion that over-approximates the effect of the classifier-in-the-loop architecture. %
We do not provide a systematic method to derive the tightest possible over-approximation with respect to the control properties of interest. %
It is possible that the over-approximation we construct does not satisfy the control properties, even though a tighter one exists that would satisfy them. %
Furthermore, it is unclear how the choice of the partitions for the differential inclusion and the polyhedral Lyapunov function affects the convergence of Algorithm~\ref{alg:projection}. %
We investigate these issues in future work. %

In our framework, we have assumed that the set of controls $\Control$ is known, via prior knowledge or human expertise, and the measurements are labeled. %
In future work, we will investigate the case where the control set $\Control$ needs to be determined, and/or the measurements are unlabeled. %
\bibliographystyle{IEEEtran}
\bibliography{ms}
\nocite{Wicks94}
\appendix
\def\localVf{V}
\def\aDomain{D}
\def\aEq{x_e}

We consider a differential inclusion of the form 
\begin{dmath}
\dot{x} \in \diffinclusion(x) \label{eq:appdiffinc}	
\end{dmath}
\noindent and a candidate Lyapunov function $\localVf(x)$. %
We will recount results from~\cite{Cortes2008} that provide conditions under which $\localVf(x)$ allows one to claim different properties of~\eqref{eq:appdiffinc}. %

\begin{defn}[Locally Lipschitz]
	A function $\localVf \colon \R^n \to \R$ is \emph{locally Lipschitz} at $x \in \R^n$ if there exists $\gamma, \epsilon \in \R_{+}$ such that $\lVert \localVf(y) - \localVf(x) \rVert \leq \gamma \lVert x - y \rVert$ for all $y \in B_{\epsilon}(x)$.
\end{defn}

\begin{defn}[Generalized gradient]
Let $\localVf$ be a locally Lipschitz function, and let $Z$ be the set of points where $\localVf$ fails to be differentiable. %
The generalized gradient $\partial \localVf(x)$ at $x$ is defined by
\begin{dmath}
	\partial \localVf (x)= co \{\lim_{i \to \infty} \nabla \localVf(x) \colon x_i \hiderel{\to} x, x \hiderel{\notin} S \cup Z \},
\end{dmath}
\noindent where $S$ is any set of measure zero that can be arbitrarily chosen to simplify the computation. %
The resulting set $\partial \localVf (x)$ is independent of the choice of $S$.
\end{defn}
\vspace{-1mm}
\begin{defn}[Set-valued Lie Derivative]
Given a locally Lipschitz function $\localVf : \R^n \to \R$ and a set-valued map $\diffinclusion \colon \R^n \to 2^{\R^n}$, the set-valued Lie derivative $\mathcal{L}_{\diffinclusion}f(x)$ of $\localVf$ with respect to $\diffinclusion$ at $x \in \R^n$ is given by
\begin{dmath*}
\mathcal{L}_{\diffinclusion}\localVf(x) = \{ z \hiderel{\in} \R 	\colon \exists v \hiderel{\in} \diffinclusion \textrm{ such that } \zeta^T v \hiderel{=} a\ \forall \zeta \hiderel{\in} \partial \localVf(x) \}.
\end{dmath*}
\end{defn}

\begin{defn}[Caratheodory solution]
	A Caratheodory solution of $\dot x(t) \in \diffinclusion(x)$ defined on $[t_0, t_1] \subset [0,\infty)$ is an absolutely continuous map $
x \colon [t_0, t_1] \to \R^n$ such that $\dot x (t) \in \diffinclusion(x)$ for almost every $t \in [t_0, t_1]$.
\end{defn}

Note that Caratheodory solutions of differential inclusions are identical to Filippov solutions of discontinuous systems with the typical convex relaxation of the dynamics. %
We are now ready to present three results from~\cite{Cortes2008}. %
\vspace{-1mm}
\begin{prop}[Proposition S1 from~\cite{Cortes2008}]
\label{prop:S1}
Let $\diffinclusion$ be locally bounded and take nonempty, compact, and convex values. Assume that the set-valued map $x \mapsto \diffinclusion(x)$ is upper
semicontinuous. Then, for all $x_0 \in \R^n$ there exists a Caratheodory solution of $\dot x(t) \in \diffinclusion(x)$ with initial condition $x(t_0) = x_0$.
\end{prop}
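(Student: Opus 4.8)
The plan is to prove existence via the classical Marchaud-type argument: construct approximate (Euler polygonal) solutions, extract a uniformly convergent subsequence by Arzel\`a--Ascoli, and pass to the limit using convexity and upper semicontinuity. First I would exploit local boundedness to get an a priori bound. Fix $r > 0$ and choose $M$ with $\norm{v} \le M$ for all $v \in \diffinclusion(x)$ and all $x \in B(x_0, r)$. Setting $T = r/M$, any arc issuing from $x_0$ whose derivative is bounded by $M$ stays inside $B(x_0, r)$ on $[t_0, t_0 + T]$, so the entire construction remains in the region where the bound is valid.

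Next I would build, for each integer $k$, a piecewise-linear arc $x_k$ on $[t_0, t_0 + T]$: partition the interval into $k$ equal subintervals with nodes $t_j$, and on each subinterval give $x_k$ constant slope equal to a selected $v_j \in \diffinclusion(x_k(t_j))$ taken at the left node (such a selection exists because the values are nonempty). Since $\norm{\dot x_k} \le M$ almost everywhere, the family $\{x_k\}$ is uniformly bounded and equi-Lipschitz, hence equicontinuous. Arzel\`a--Ascoli then furnishes a subsequence converging uniformly to a Lipschitz (thus absolutely continuous) limit $x$, and passing to a further subsequence the derivatives $\dot x_k$ converge weakly in $L^2([t_0, t_0+T], \R^n)$ to $\dot x$.

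The main obstacle is the final limit passage: establishing $\dot x(t) \in \diffinclusion(x(t))$ for almost every $t$. Weak convergence of $\dot x_k$ does not by itself deliver a pointwise membership statement. This is exactly where the two structural hypotheses enter. Convexity of the values lets Mazur's lemma convert the weak limit into a strong $L^2$ (hence a.e. pointwise, along a subsequence) limit of convex combinations of the $\dot x_k$, keeping those combinations inside the convex sets $\diffinclusion(\cdot)$. Upper semicontinuity together with closed convex compact values gives that the graph of $\diffinclusion$ is closed. Combining these — the convergence theorem of Aubin--Cellina — and absorbing the vanishing discretization error $\norm{x_k(t) - x_k(t_j)} \le M/k \to 0$ into the upper-semicontinuity estimate, I would conclude that $\dot x(t) \in \diffinclusion(x(t))$ for almost every $t$.

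This shows $x$ is a Caratheodory solution on the local interval $[t_0, t_0 + T]$. To obtain a solution on the stated domain, I would invoke a routine continuation argument, restarting the construction from the endpoint and using that the same local bound holds on a neighborhood of any reached point. I expect Step~three, the weak-to-pointwise limit passage, to be the genuinely delicate part; the bounding and compactness steps are standard once local boundedness fixes the working cylinder.
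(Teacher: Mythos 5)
The paper does not actually prove this proposition: it is imported verbatim from the Cort\'es tutorial cited alongside it, which in turn states it as a classical existence result for upper semicontinuous differential inclusions with nonempty, compact, convex values. Your proposal reconstructs the standard textbook proof (Euler polygons, Arzel\`a--Ascoli, and the Aubin--Cellina convergence theorem), and the outline is sound: you correctly identify that convexity is what lets Mazur's lemma upgrade weak convergence of the derivatives to a pointwise membership statement, and that upper semicontinuity together with compact values supplies the closed-graph property needed to pass to the limit. Three small remarks. First, local boundedness is a pointwise-neighborhood condition, so obtaining a single constant $M$ valid on all of $B(x_0,r)$ requires covering the closure of that ball by finitely many neighborhoods on which the map is bounded; this deserves one sentence. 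Second, the discretization error on a subinterval is $MT/k$ rather than $M/k$. Third, the continuation step at the end is unnecessary for the statement as given: a Caratheodory solution in the sense defined in the appendix need only exist on some interval $[t_0,t_1]$, so the local construction on $[t_0,t_0+T]$ already establishes the claim.
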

\vspace{-1mm}
\begin{thm}[Theorem~1 in~\cite{Cortes2008}]
\label{thm:cortes1}
Let $\diffinclusion \colon \R^n \mapsto 2^{\R^n}$ be a set-valued map satisfying the hypotheses of
Proposition~\ref{prop:S1}, let $\aEq$ be an equilibrium of the differential inclusion~\eqref{eq:appdiffinc}, and let $\aDomain \subseteq \R^n$ be an
open and connected set with $\aEq \in \aDomain$. Furthermore, let $\localVf : \R^n \mapsto \R$ be such that the following
conditions hold:
\begin{itemize}
	\item[(i)] $\localVf$ is locally Lipschitz and regular on $\aDomain$.
	\item[(ii)] $\localVf(\aEq) = 0$, and $\localVf(x) > 0$ for $x \in \aDomain \backslash {\aEq}$.
	\item[(iii)] $\max \mathcal{L}_{\diffinclusion}\localVf(x) \leq 0$ for each $x \in \aDomain$.
\end{itemize}
Then, $\aEq$ is a stable equilibrium of~\eqref{eq:appdiffinc}. %
In addition, if $(iii)$ above is replaced by
\begin{itemize}
	\item[(iii)']  $\max \mathcal{L}_{\diffinclusion}\localVf(x) < 0$ for each $x \in \aDomain\backslash {\aEq}$,
\end{itemize}
then $\aEq$ is an asymptotically stable equilibrium of~\eqref{eq:appdiffinc}.	
\end{thm}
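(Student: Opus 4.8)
The plan is to run the classical Lyapunov argument, isolating the one genuinely nonsmooth ingredient into a preliminary chain-rule lemma. First I would show that for any Caratheodory solution $x(\cdot)$ of~\eqref{eq:appdiffinc} the composite map $t \mapsto \localVf(x(t))$ is absolutely continuous and satisfies $\frac{d}{dt}\localVf(x(t)) \in \mathcal{L}_{\diffinclusion}\localVf(x(t))$ for almost every $t$. Absolute continuity is immediate since $\localVf$ is locally Lipschitz on $\aDomain$ and $x(\cdot)$ is absolutely continuous. The membership is where Clarke regularity from $(i)$ is essential: for a regular function the ordinary derivative of $t \mapsto \localVf(x(t))$, wherever it exists, equals $\zeta^T \dot x(t)$ for \emph{every} $\zeta \in \partial \localVf(x(t))$, which (taking $v = \dot x(t) \in \diffinclusion(x(t))$) is exactly the defining condition of membership in $\mathcal{L}_{\diffinclusion}\localVf(x(t))$. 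Combined with $(iii)$ this yields $\frac{d}{dt}\localVf(x(t)) \le \max \mathcal{L}_{\diffinclusion}\localVf(x(t)) \le 0$ almost everywhere, so $t \mapsto \localVf(x(t))$ is nonincreasing along every solution that remains in $\aDomain$.

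For stability, fix $\epsilon > 0$ with $\overline{B}_\epsilon(\aEq) \subseteq \aDomain$ and set $m = \min_{\lVert x - \aEq \rVert = \epsilon} \localVf(x)$, which is strictly positive by $(ii)$ and compactness of the sphere. Continuity of $\localVf$ together with $\localVf(\aEq)=0$ furnishes $\delta \in (0,\epsilon)$ with $\localVf(x) < m$ on $B_\delta(\aEq)$. For any solution with $x(0) \in B_\delta(\aEq)$, the decrease lemma gives $\localVf(x(t)) \le \localVf(x(0)) < m$ as long as the solution stays in $\aDomain$; since reaching the sphere $\lVert x - \aEq \rVert = \epsilon$ would force $\localVf \ge m$, the solution can never leave $B_\epsilon(\aEq)$, establishing Lyapunov stability.

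For the strict case, stability already confines any solution started in a small neighborhood to a compact set $K \subseteq \aDomain$, so $t \mapsto \localVf(x(t))$ is nonincreasing and bounded below, hence converges to some $c \ge 0$; it then suffices to prove $c = 0$, since $\localVf(x(t)) \to 0$ with positive definiteness forces $x(t) \to \aEq$. The hard part is ruling out $c > 0$: the strict bound $(iii)'$ cannot simply be integrated because $\frac{d}{dt}\localVf(x(t))$ need not be uniformly bounded away from zero along the trajectory. I would resolve this by a LaSalle-type invariance argument. Under the standing hypotheses of Proposition~\ref{prop:S1}, the $\omega$-limit set $\Lambda$ of the bounded solution is nonempty, compact, and weakly invariant, and $\localVf \equiv c$ on $\Lambda$ by continuity and monotone convergence. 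If $c > 0$, then $\aEq \notin \Lambda$ because $\localVf(\aEq)=0\neq c$; choosing a complete solution $y(\cdot)$ lying entirely in $\Lambda$ (weak invariance), the decrease lemma applied to $y$ gives $0 = \frac{d}{dt}\localVf(y(t)) \le \max \mathcal{L}_{\diffinclusion}\localVf(y(t)) < 0$ almost everywhere, a contradiction. Hence $c = 0$ and $\aEq$ is asymptotically stable. I expect the two points needing the most care to be the chain-rule lemma of the first paragraph, where regularity is indispensable, and the existence and weak invariance of $\Lambda$, which rely on the local boundedness, compact convex-valuedness, and upper semicontinuity assumptions that guarantee well-behaved solutions through Proposition~\ref{prop:S1}.
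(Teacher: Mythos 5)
This statement is not proved in the paper at all: it is Theorem~1 of~\cite{Cortes2008}, quoted verbatim in the appendix as an imported result (the appendix explicitly says it ``recounts'' results from that reference), so there is no internal proof to compare against. Your reconstruction is correct and follows essentially the same route as the proof in the cited source: the Bacciotti--Ceragioli chain-rule lemma (that $t \mapsto \localVf(x(t))$ is absolutely continuous with $\frac{d}{dt}\localVf(x(t)) \in \mathcal{L}_{\diffinclusion}\localVf(x(t))$ a.e., where Clarke regularity is exactly what makes every generalized gradient element give the same directional value), the classical sphere-minimum argument for stability, and a LaSalle-type $\omega$-limit argument for the strict case. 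Two small points you should make explicit if writing this out in full: first, $\mathcal{L}_{\diffinclusion}\localVf(x)$ can be empty at some $x$, so condition $(iii)$ must be read with the convention $\max \emptyset = -\infty$, and your chain-rule lemma is what guarantees nonemptiness almost everywhere \emph{along} trajectories; second, before invoking the $\omega$-limit set you need that maximal Caratheodory solutions trapped in the compact set $K$ are complete, which follows from local boundedness of $\diffinclusion$ and a standard continuation argument but deserves a sentence. With those noted, the argument is sound.
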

\vspace{-1mm}
\begin{thm}[Theorem~2 in~\cite{Cortes2008}]
\label{thm:cortes2}
	Let $\diffinclusion \colon \R^n \mapsto 2^{\R^n}$ be a set-valued map satisfying the hypotheses of
Proposition~\ref{prop:S1}, and let $\localVf : \R^n \mapsto \R$ be a locally Lipschitz and regular function. Let $S \subset \R^n$ be compact and strongly invariant for~\eqref{eq:appdiffinc}, and assume that $\max \mathcal{L}_{\diffinclusion}\localVf(x) \leq 0$ for each $x \in S$. Then,
all solutions $x : [0,\infty) \mapsto \R^n$ of~\eqref{eq:appdiffinc} starting at $S$ converge to the largest weakly invariant set $M$
contained in
	\begin{dmath}
	S \cap \{x \in \R^n \colon 0 \in  \mathcal{L}_{\diffinclusion}\localVf(x) \}	
	\end{dmath}
Moreover, if the set $M$ consists of a finite number of points, then the limit of each solution starting
in $S$ exists and is an element of $M$.
\end{thm}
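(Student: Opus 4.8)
The plan is to show that the feasibility of \eqref{eq:lemconstraintsfirst}--\eqref{eq:lemconstraintslast} certifies that the candidate $V_{\Vprtition}$ satisfies the hypotheses of Theorem~\ref{thm:cortes1} and Theorem~\ref{thm:cortes2}, so that the three claimed properties follow by invoking those results on $Dom(\prtition)$. First I would discharge the standing hypotheses of Proposition~\ref{prop:S1}: each $\diffinclusion_i = co\{A_{ik}\state + a_{ik}\}_{k \in \Dindex{i}}$ takes nonempty, compact, convex values (convex hulls of finitely many affine images), and the map $\state \mapsto \diffinclusion(\state)$ obtained from the closed cells $\pState_i$ is locally bounded and upper semicontinuous, so Caratheodory solutions exist. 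I would then read each block of constraints as one Lyapunov hypothesis. The positivity block \eqref{eq:lemconstraintsfirst}--\eqref{eq:lemconstraintssecond}, namely $p_i = \vMatrix_i^T \mu_i$ with $\mu_i \geq \mathbf{1}$, gives $V_{\Vprtition}(\state) = p_i^T \state = \mu_i^T(\vMatrix_i \state) \geq 0$ for $\state \in \qState_i$ since $\vMatrix_i \state \geq 0$ there; because each $\qState_i$ is pointed at the origin, $\vMatrix_i \state \neq 0$ for $\state \neq 0$ and $\mu_i > 0$, so $V_{\Vprtition} > 0$ away from the origin and $V_{\Vprtition}(0) = 0$, which is condition (ii). The continuity block \eqref{eq:lemconstraintscont1}--\eqref{eq:lemconstraintslast}, $p_i - p_j = \lambda_{ij}\vcVec_{ij}$ on adjacent cones $(i,j) \in \Mode_{cont}$ whose shared facet lies in $\{\state : \vcVec_{ij}^T \state = 0\}$, forces $p_i^T \state = p_j^T \state$ across that facet, so $V_{\Vprtition}$ is continuous; with its polyhedral structure (convex sublevel sets, equivalently representability as an $\infty$-norm of a linear map) it is convex and hence locally Lipschitz and regular, which is condition (i).

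The heart of the argument is condition (iii)/(iii'). I would use that the generalized gradient is $\partial V_{\Vprtition}(\state) = co\{p_j : \state \in \qState_j\}$ while $\diffinclusion(\state) = co\{A_{ik}\state + a_{ik} : \state \in \pState_i\}$, so every element of $\mathcal{L}_{\diffinclusion}V_{\Vprtition}(\state)$ arises as a value $\zeta^T v$ with $\zeta \in \partial V_{\Vprtition}(\state)$ and $v \in \diffinclusion(\state)$. Since $\zeta^T v$ is bilinear and both sets are polytopes with finitely many generators, it suffices to bound $\zeta^T v$ above by a negative number over all generator pairs, which reduces the continuum condition to the finite index set $\Mode_{dec}$. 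For a fixed pair, the decrease block \eqref{eq:lemconstraintsdecrease}--\eqref{eq:lemconstraintsdecrease2} is exactly a Farkas certificate: writing $v_{ijk} = (\nu,\nu_0)$, the identity $\bmat{\cMatrix_i & \cVec_i \\ 0 & 1}^T v_{ijk} = -\bmat{A_{ik}^T \\ a_{ik}^T} p_j$ yields $p_j^T(A_{ik}\state + a_{ik}) = -\nu^T(\cMatrix_i \state + \cVec_i) - \nu_0$, which for $\state \in \pState_i$ (so $\cMatrix_i \state + \cVec_i \geq 0$) and $v_{ijk} \geq \mathbf{1}$ is at most $-\nu_0 \leq -1 < 0$. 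Hence $\max \mathcal{L}_{\diffinclusion}V_{\Vprtition}(\state) < 0$ on the relevant region; this is the quantifier-elimination step in the spirit of~\cite{JohanssonThesis}.

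With the hypotheses established, the conclusions follow. Because $\dot V_{\Vprtition} < 0$ on $Dom(\prtition)$ outside the smallest sublevel set, $V_{\Vprtition}$ strictly decreases on the boundary of the largest sublevel set $\lyaplevelset_{max}$, so no trajectory can cross it outward: $\lyaplevelset_{max}$ is strongly forward invariant, which is claim~1 and furnishes the compact strongly invariant set required by Theorem~\ref{thm:cortes2}. Applying that theorem with $S = \lyaplevelset_{max}$ shows every solution converges to the largest weakly invariant set contained in $\lyaplevelset_{max} \cap \{\state : 0 \in \mathcal{L}_{\diffinclusion}V_{\Vprtition}(\state)\}$, and the strict decrease confines this set to $\lyaplevelset_{min}$, giving ultimate boundedness to $\lyaplevelset_{min}$ (claim~2). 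For the final statement, when $0 \in Dom(\prtition)$ the origin is an equilibrium and the strict bound (iii') holds on $\lyaplevelset_{max}\setminus\{0\}$, so Theorem~\ref{thm:cortes1} yields asymptotic stability with region of attraction $\lyaplevelset_{max}$.

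The main obstacle is the Lie-derivative step: I must justify the passage from the uncountably many states to the finite set $\Mode_{dec}$ while correctly handling the nonsmoothness of $V_{\Vprtition}$ at cell interfaces, where $\partial V_{\Vprtition}$ is a genuine convex hull of several $p_j$, and I must verify that the set on which the Lie derivative can vanish is contained in $\lyaplevelset_{min}$ rather than merely nonempty. A subtle related point is reconciling the uniform bound $\dot V_{\Vprtition} \leq -\nu_0 \leq -1$ with the equilibrium at the origin in the asymptotic-stability case: there the affine terms $\cVec_i$ and $a_{ik}$ must vanish on the cells meeting the origin, and obtaining strict negativity of the Lie derivative on $\lyaplevelset_{max}\setminus\{0\}$ (so that $\lyaplevelset_{min}$ collapses to the origin) requires care with this homogeneous limit. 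Verifying regularity of the polyhedral $V_{\Vprtition}$ and \emph{strong}, not merely forward, invariance of $\lyaplevelset_{max}$ for the inclusion are the remaining technical items.
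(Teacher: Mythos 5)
There is a genuine gap, and it is structural: you have not proved the statement at all. The statement in question is Theorem~\ref{thm:cortes2}, i.e.\ Theorem~2 of~\cite{Cortes2008} --- a nonsmooth LaSalle invariance principle for differential inclusions, which the paper quotes from the literature without proof. Your proposal instead \emph{invokes} Theorem~\ref{thm:cortes2} (together with Theorem~\ref{thm:cortes1}) as a black box and uses it to derive the paper's Lemma~\ref{lem:mainconstraintlemma}: your three blocks (positivity via \eqref{eq:lemconstraintsfirst}--\eqref{eq:lemconstraintssecond}, continuity via \eqref{eq:lemconstraintscont1}--\eqref{eq:lemconstraintslast}, decrease via the Farkas-type certificate \eqref{eq:lemconstraintsdecrease}--\eqref{eq:lemconstraintsdecrease2}) followed by ``Applying that theorem with $S = \lyaplevelset_{max}$'' is, almost line for line, the paper's own appendix proof of Lemma~\ref{lem:mainconstraintlemma}, including the quantifier-elimination step via Lemma~4.7 of~\cite{JohanssonThesis}. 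As a proof of Theorem~\ref{thm:cortes2} itself, this is circular: the very conclusion you are asked to establish appears as the main tool in your third paragraph. Constraints such as \eqref{eq:lemconstraintsfirst}--\eqref{eq:lemconstraintslast}, the partitions $\Vprtition$, and the sets $\lyaplevelset_{max}$, $\lyaplevelset_{min}$ have no role in the theorem's hypotheses, which concern an arbitrary locally Lipschitz regular $V$ and an arbitrary inclusion satisfying Proposition~\ref{prop:S1}.

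A proof of the actual statement would have to run through the invariance-principle machinery that your proposal never touches. Concretely: for any Caratheodory solution $x(\cdot)$ in $S$, the map $t \mapsto V(x(t))$ is absolutely continuous (Lipschitz $V$ composed with an absolutely continuous arc), and \emph{regularity} of $V$ is exactly what guarantees $\frac{d}{dt} V(x(t)) \in \mathcal{L}_{\diffinclusion}V(x(t))$ for almost every $t$ --- without regularity the set-valued Lie derivative may be empty or only bound the derivative one-sidedly. The hypothesis $\max \mathcal{L}_{\diffinclusion}V(x) \leq 0$ on $S$ then makes $V$ nonincreasing along solutions; compactness and strong invariance of $S$ give completeness and precompactness of trajectories, hence a nonempty, compact, connected, weakly invariant omega-limit set $\Omega(x) \subseteq S$ on which $V$ is constant; constancy of $V$ on a weakly invariant set forces $0 \in \mathcal{L}_{\diffinclusion}V$ at each of its points, so $\Omega(x) \subseteq M$. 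The final claim uses connectedness of $\Omega(x)$: if $M$ is finite, $\Omega(x)$ is a singleton and the limit exists. None of these steps (absolute continuity, the a.e.\ Lie-derivative identity, omega-limit sets, weak invariance) appears in your write-up. The content you did produce is sound and well targeted --- but at Lemma~\ref{lem:mainconstraintlemma}, where it essentially reproduces the paper's argument, not at the theorem you were asked to prove.
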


\vspace{-1mm}
The results above consider general differential inclusions and candidate Lyapunov functions. %
These conditions involve quantifiers of the form `for each $x \in S$' where $S$ is a set. %
For the piece-wise affine functions we consider, we can remove these quantifiers using the following result. %
Let $x \gneq 0 \iff x \geq 0, x \neq 0$. %
\vspace{-1mm}
\begin{lem}[Lemma $4.7$~\cite{JohanssonThesis}]
\label{lem:poslyap}
The following are equivalent
\begin{enumerate}
\item $\vMatrix \state \gneq 0 \implies p^T  \state > 0$.
\item there exists $\mu \in \R^{n}$ such that 
	\begin{align}
			\vMatrix^T \mu   &= p, \textrm{ and }\label{eq:toalyapcond1}\\
			\mu &> 0. \label{eq:toalyapcond1b}
	\end{align}
\end{enumerate}
\end{lem}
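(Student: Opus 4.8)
The plan is to establish the two implications separately: $(2)\Rightarrow(1)$ is a one-line computation, while $(1)\Rightarrow(2)$ rests on linear-programming duality.

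For $(2)\Rightarrow(1)$, I would assume $\mu > 0$ with $\vMatrix^T \mu = p$ and, for any $\state$ with $\vMatrix \state \gneq 0$, rewrite $p^T \state = \mu^T(\vMatrix\state) = \sum_i \mu_i (\vMatrix\state)_i$. Since $\vMatrix\state \geqq 0$ has at least one strictly positive entry and every $\mu_i$ is strictly positive, this sum is strictly positive, giving $p^T\state > 0$.

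For $(1)\Rightarrow(2)$, the idea is to certify (1) through the optimal value of a linear program and then read $\mu$ off a dual optimal solution. Consider maximizing $\mathbf{1}^T \vMatrix \state$ over $\state$ subject to $\vMatrix \state \geqq 0$ and $p^T \state \leq 0$. The feasible set is a cone containing the origin, so the value is either $0$ or $+\infty$; hypothesis (1) forces every feasible $\state$ to satisfy $\vMatrix\state = 0$ (otherwise $\vMatrix\state \gneq 0$ would give $p^T\state > 0$, contradicting $p^T\state \leq 0$), so the optimal value is $0$ and is attained at $\state = 0$. Strong duality then produces multipliers $\sigma \geqq 0$ and $\tau \geq 0$ satisfying $\tau\, p = \vMatrix^T(\mathbf{1} + \sigma)$.

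It then remains to split on $\tau$. If $\tau > 0$, then $\mu := (\mathbf{1}+\sigma)/\tau > 0$ (because $\sigma \geqq 0$) satisfies $\vMatrix^T\mu = p$, which is exactly (2). If $\tau = 0$, then $\eta := \mathbf{1}+\sigma > 0$ lies in $\ker\vMatrix^T$, and, invoking the standing assumption that the cells $\qState_j$ are pointed at the origin (equivalently $\ker\vMatrix = \{0\}$, so that $\vMatrix^T$ is surjective), I would take any $\mu_0$ with $\vMatrix^T\mu_0 = p$ and set $\mu := \mu_0 + s\eta$, which stays a solution of $\vMatrix^T\mu = p$ and becomes strictly positive for $s$ large. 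I expect the crux to be exactly these strict inequalities---$\vMatrix\state \neq 0$ in (1) and $\mu > 0$ in (2)---which preclude a single direct Farkas argument and necessitate the case split; the degenerate case $\tau = 0$ is where pointedness of $\qState_j$ is essential, since without it the equivalence can fail (e.g.\ when $\{\state : \vMatrix\state \geqq 0\}$ contains a line).
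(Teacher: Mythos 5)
The paper does not actually prove this lemma---it is imported verbatim as Lemma~4.7 of~\cite{JohanssonThesis} and used as a black box---so there is no in-paper argument to compare against; your proposal supplies a proof where the authors supply a citation. On its own terms the argument is sound. The $(2)\Rightarrow(1)$ computation is exactly right. For $(1)\Rightarrow(2)$, your auxiliary LP $\max\{\mathbf{1}^T\vMatrix\state : \vMatrix\state\geqq 0,\ p^T\state\le 0\}$ has optimal value $0$ attained at the origin under hypothesis (1), and the dual feasibility condition $\tau p=\vMatrix^T(\mathbf{1}+\sigma)$ with $\sigma\geqq 0$, $\tau\ge 0$ is derived correctly; the case split on $\tau$ then does the work. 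This is, in substance, a Stiemke/Farkas-type theorem of the alternative, which is also how such statements are established in the cited thesis, so you are not losing generality by rederiving it. Two remarks on the degenerate branch. First, you are right that the lemma as literally stated is false without a side condition (e.g.\ $\vMatrix=\bmat{1&0\\-1&0}$, $p=(0,1)^T$ satisfies (1) vacuously but not (2)), and your appeal to pointedness of $\qState_j$, i.e.\ $\ker\vMatrix=\{0\}$, legitimately closes the gap since it makes $\vMatrix^T$ surjective. Note, though, that $\tau=0$ can only occur when $\{\state:\vMatrix\state\gneq 0\}=\emptyset$ (a strictly positive $\mathbf{1}+\sigma$ in $\ker\vMatrix^T$ forbids any such $\state$ by the easy direction), so the alternative guard is simply to assume that set nonempty---which is precisely the hypothesis the authors do add to the affine analogue, Lemma~\ref{lem:mainlemma}---and then $\tau>0$ is automatic and the second case never arises. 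Second, your parenthetical ``fails when the cone contains a line'' is slightly loose: a lineality space is necessary but not sufficient for failure (a half-space with $p\perp\ker\vMatrix$ still satisfies the equivalence); failure needs both $\{\state:\vMatrix\state\gneq 0\}=\emptyset$ and $p\notin\mathrm{range}(\vMatrix^T)$. Neither point affects correctness of the proof as written.
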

The result establishes equivalence between the two statements. %
When the functions are affine instead of linear, we can derive a similar condition but the implication flows only in one direction. %
The formal statement is given below. %
\vspace{-1mm}
\begin{lem}
\label{lem:mainlemma}
	Let the set $\{x \in \R^n \vert E x + e\gneq 0\}$ be non-empty. 
	If there exists $\nu \in \R^{n+1}$ such that 
	\begin{align}
			\bmat{E & e\\ 0 &1}^T \nu + \bmat{A & a}^T p &= 0, \textrm{ and }\label{eq:toalyapcond3}\\
			\nu &> 0, \label{eq:toalyapcond4}
	\end{align}
	then $E \state + e\gneq 0 \implies p^T \left(A \state +a \right) < 0$. %
\end{lem}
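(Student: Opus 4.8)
The plan is to prove the implication by a direct substitution argument that mirrors the ``if'' direction of Lemma~\ref{lem:poslyap}, exploiting the block structure of the augmented matrix. First I would partition the multiplier $\nu$ conformally with the rows of $\begin{bmatrix} E & e\\ 0 & 1\end{bmatrix}$, writing $\nu = \begin{bmatrix}\nu_E \\ \nu_1\end{bmatrix}$, where $\nu_E$ has the same length as the number of rows of $E$ and $\nu_1 \in \R$ is the scalar attached to the appended row $\begin{bmatrix}0 & 1\end{bmatrix}$. Transposing and reading off the two block rows of the identity~\eqref{eq:toalyapcond3} then splits it into the vector equation $E^T \nu_E + A^T p = 0$ and the scalar equation $e^T \nu_E + \nu_1 + a^T p = 0$.

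Next I would substitute these two identities into the quantity of interest. Writing $p^T(Ax + a) = (A^T p)^T x + a^T p$ and replacing $A^T p = -E^T\nu_E$ and $a^T p = -e^T\nu_E - \nu_1$ collapses the expression to
\[
p^T(Ax + a) = -\nu_E^T(Ex + e) - \nu_1 .
\]
This is the crux of the argument: the certificate turns the sign of $p^T(Ax+a)$ into the negative of a nonnegative combination of the constraint functions, minus the strictly positive slack $\nu_1$.

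Finally I would run the sign argument. The hypothesis $\nu > 0$ gives $\nu_E > 0$ componentwise and $\nu_1 > 0$, while $Ex + e \gneq 0$ gives $Ex + e \geq 0$. Hence every term of $\nu_E^T(Ex+e)$ is nonnegative, so $\nu_E^T(Ex+e) \geq 0$, and therefore $p^T(Ax+a) \leq -\nu_1 < 0$, which is exactly the claimed conclusion. Note that the strictness is supplied entirely by $\nu_1 > 0$, coming from the appended row, so that even the weaker hypothesis $Ex+e \geq 0$ would already force $p^T(Ax+a) < 0$; the non-emptiness assumption is needed only to keep the implication non-vacuous.

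The computation is routine, so the genuine content is conceptual rather than a technical obstacle: one must recognize that the appended row $\begin{bmatrix}0 & 1\end{bmatrix}$ and its multiplier $\nu_1$ act as a homogenizing coordinate. Concretely, the statement is the ``$(2)\Rightarrow(1)$'' half of Lemma~\ref{lem:poslyap} applied to the homogenized data $F = \begin{bmatrix}E & e\\ 0 & 1\end{bmatrix}$, $q = -\begin{bmatrix}A & a\end{bmatrix}^T p$, and $\mu = \nu$, evaluated at the augmented vector $\xi = \begin{bmatrix}x\\ 1\end{bmatrix}$, for which $F\xi = \begin{bmatrix}Ex+e\\ 1\end{bmatrix} \gneq 0$ holds automatically once $Ex+e \geq 0$. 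I expect this viewpoint to be the subtle point to get right, and it is also what explains the one-directionality emphasized in the text: the converse would require the full equivalence of Lemma~\ref{lem:poslyap} over \emph{all} $\xi$, whereas the affine problem restricts $\xi$ to the slice whose last coordinate equals $1$, so the Farkas-type necessity direction is lost.
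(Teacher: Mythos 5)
Your proof is correct and takes essentially the same route as the paper: the paper's proof is precisely the one-line reduction you identify at the end, namely redefining $\vMatrix$ as $\bmat{E & e\\ 0 &1}$ and $p$ as $-\bmat{A & a}^T p$ in the $(2)\Rightarrow(1)$ direction of Lemma~\ref{lem:poslyap} and then setting the homogenizing coordinate $z=1$. Your explicit block decomposition of $\nu$ and the identity $p^T(Ax+a) = -\nu_E^T(Ex+e) - \nu_1$ simply unpack that application, and your observations about where the strictness comes from ($\nu_1>0$) and why only one direction survives the affine restriction are accurate.
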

\begin{proof}
	Redefine $\vMatrix$ to be $\bmat{E & e\\ 0 &1}$ and $p$ to be $-\bmat{A & a}^T p$ in Lemma~\ref{lem:poslyap}. %
	By Lemma~\ref{lem:poslyap}, if condition~\eqref{eq:toalyapcond3} holds, then $E \state + e z\gneq 0 \implies p^T A \state + p^T a z  < 0$. Setting $z=1$, completes the proof. 
\end{proof}
\vspace{-1mm}
\begin{lem}
\label{lem:sublemma}
Let $\State_i \cap \State_j =\{ x \in \R^n \vert \vcVec_{ij}^T x =0 \}$. %
If $\exists \lambda > 0$ such that 
\begin{dmath}
p_i - p_j = \lambda \vcVec_{ij}, 
\end{dmath}
\noindent then	$(p_i - p_j)^T  \state = 0\ \forall \state\in \State_i \cap \State_j$.
\end{lem}
\begin{proof}
We omit this proof due to its similarity to Lemma~\ref{lem:mainlemma}. %
\end{proof}
\vspace{-1mm}
We are now ready to prove Lemma~\ref{lem:mainconstraintlemma}. 
\begin{proof}
We first show that the Lyapunov function $V_\Vprtition(x)$ satisfying constraints~\eqref{eq:lemconstraintsfirst}, \eqref{eq:lemconstraintssecond}, \eqref{eq:lemconstraintscont1}, and~\eqref{eq:lemconstraintslast} possess the desired properties of Theorem~\ref{thm:cortes1}. %
By construction, $V_\Vprtition(x)$ is piece-wise linear on $\R^n$ except at the boundaries between cells of $\Vprtition$. %
If constraints~\eqref{eq:lemconstraintscont1}, and~\eqref{eq:lemconstraintslast} are satisfied, then by Lemma~\ref{lem:sublemma} $V_\Vprtition(x)$ is single-valued at these boundaries, so that $V_\Vprtition(x)$ is a continuous function on $\R^n$. %
By Lemma~\ref{lem:poslyap}, if conditions~\eqref{eq:lemconstraintsfirst}, \eqref{eq:lemconstraintssecond} are satisfied, then $x \in \qState_j, x \neq 0 \implies p_j^T x = V_\Vprtition(x) > 0$. %
By construction, $V_\Vprtition(0) = 0$. %
The definition of the cells $\qState_j \in \Vprtition$ imply that $V_{\Vprtition}(x)$ is a convex function of the form $V_{\Vprtition}(\state) = \max_{j \in \Vindex} p_j^T x$. %
Convex functions are regular~\cite{Cortes2008}. %
Therefore, if constraints~\eqref{eq:lemconstraintsfirst}, \eqref{eq:lemconstraintssecond}, \eqref{eq:lemconstraintscont1}, and~\eqref{eq:lemconstraintslast} are satisfied, then the candidate Lyapunov function $V_{\Vprtition}(x)$ satisfies the conditions of Theorem~\ref{thm:cortes1}. %

For system $\pwlSys_\prtition$, the differential inclusion is a piece-wise convex combination of a finite number of affine functions, and therefore Proposition~\ref{prop:S1} holds. %
The set-valued Lie derivative for $\pwlSys_\prtition$ reduces to
\begin{dmath*}
	\mathcal{L}_{\diffinclusion} V_{\Vprtition}(x) = co_{k \in \Dindex{i}} \left( p_j^T (A_{ik} x+ a_{ik}) \right) \condition{if $x \in \pState_i \cap \qState_j$}.
\end{dmath*}

If conditions~\eqref{eq:lemconstraintsdecrease} and~\eqref{eq:lemconstraintsdecrease2} hold, then by Lemma~\ref{lem:mainlemma} for each $x \in \pState_i \cap \qState_j$ and $k \in \Dindex{i}$, $E_i x + e_i \gneq 0 \implies p_j^T (A_{ik} x+ a_{ik}) < 0$. %
In other words, conditions $(iii)$ and $(iii)'$ hold for all $x \in Dom(\prtition)$. %
Therefore,$V_{\Vprtition}(t)$ value decreases along all solutions of $\pwlSys_{\prtition}$. %
Since $V_{\Vprtition}$ is continuous and positive definite, conclusion $1)$  of Lemma~\ref{lem:mainconstraintlemma} immediately follows. %
If $x_e = 0 \in Dom(\prtition)$, Theorem~\ref{thm:cortes1} implies that the origin is asymptotically stable. %
Finally, condition $2)$ of Lemma~\ref{lem:mainconstraintlemma} follows by considering $S_{max}$ as the strongly invariant set $S$ and $S_{min}$ as the weakly invariant set $M$ in Theorem~\ref{thm:cortes2}. %
\end{proof}

\end{document}